\definecolor{blue2}{rgb}{0.67, 0.9, 0.93}
\numberwithin{equation}{section}
\newtheorem{theorem}{Theorem}[section]
\newtheorem{lemma}[theorem]{Lemma}
\theoremstyle{definition}
\newtheorem{definition}[theorem]{Definition}
\newtheorem{example}[theorem]{Example}
\newtheorem{corollary}[theorem]{Corollary}
\newtheorem{remark}[theorem]{Remark}
\newcommand{\N}{{\mathbb N}}
\newcommand{\R}{{\mathbb R}}
\newcommand{\Z}{\mathbb{Z}}
\newcommand{\T}{{\mathbb{T}}}
\definecolor{refkey}{gray}{.75}
\definecolor{labelkey}{gray}{.75}
\newcommand{\wc}{\ensuremath{w^\mathcal{C}}}
\begin{document}
\author{Daniel \v{S}pale}      %
        \address{Department of Mathematics and NTIS, Faculty of Applied Sciences, University of West Bohemia, Univerzitn\'{\i} 8, 306 14 Plze\v n, Czech Republic}
       \email{dspale@kma.zcu.cz}
\author{Petr Stehl\'{i}k}      %
        \address{Department of Mathematics and NTIS, Faculty of Applied Sciences, University of West Bohemia, Univerzitn\'{\i} 8, 306 14 Plze\v n, Czech Republic}
       \email{pstehlik@kma.zcu.cz (corresponding author)}

\title{Traveling waves in Bistable Reaction-Diffusion Cellular Automata}

\begin{abstract}
We describe various types of traveling fronts of bistable reaction-diffusion cellular automata. These dynamical systems with discrete time, space, and state spaces can be seen as fully discrete versions of widely studied bistable reaction-diffusion equations. We show that moving traveling waves for high diffusion parameters are restricted to slow speeds and their profiles are interestingly not unique. Pinned waves always exist for weak diffusion as in the case of lattice equations but do not complement parametric region of moving traveling waves. The remaining parameter domain is dominated by waves which are unique to cellular automaton settings. These higher-order traveling waves move and periodically change profile at the same time.
\end{abstract}

       \subjclass[2020]{Primary 37B15; Secondary 34A33, 39A12}

       \keywords{cellular automata; reaction-diffusion; traveling waves; pinned waves}

        \date{\today}
        



\maketitle

\section{Introduction}\label{s:introduction}
Waves in lattice reaction-diffusion equations provide one of the most fascinating distinctive behavior between the dynamics of spatiotemporal dynamical systems which differ only in underlying discrete and continuous structures. As the most typical example, the Nagumo partial differential equation (i.e., continuous space) serves as a textbook example for the phase plane analysis which provides existence of traveling waves \cite{Logan1994}. In contrast, the Nagumo lattice differential equation (discrete space) is far more complicated to study since it yields several new dynamical phenomena, e.g., pinning of waves \cite{Keener1987}, spatial topological chaos \cite{Chow1998}, and existence of nonmonotone waves \cite{Hupkes2019c}.

In this paper, we study traveling fronts of a class of bistable reaction-diffusion cellular automata, which correspond to Nagumo partial and lattice differential equations but are fully discrete, i.e., temporal, spatial, and state sets are all discrete. In Section~\ref{s:preliminaries}, we introduce a bistable discrete dynamical system
\begin{equation}\label{e:ca:general}
u(t+1)=F(u(t)),\quad t\in\N_0,\ u(t)\in S^\Z,    
\end{equation}
with  the finite state space $S\subset \N_0$ and the mapping $F:S^\Z\rightarrow S^\Z$ which involves both diffusion and reaction processes. We then compare its behavior to the widely studied corresponding partial and lattice differential equations and study three types of fronts - moving, pinned, and periodically changing higher-order traveling waves.

Beyond the natural interest in distinct mathematical properties, our motivation is driven by several applications where discrete time, space, and states are common, e.g., seasonal populations (discrete time, \cite{Allen2007}), island or pond populations (discrete space, \cite{Hanski2002}), or very small populations (discrete states, \cite{Vries2006}).

\subsubsection*{(Nonspatial) bistable dynamics.} Various biological, chemical and mechanical systems possess two stable equilibria. The simplest models are connected to ordinary differential models with cubic nonlinearities, e.g.,
\[
u'(t)=u(t)\left(1-u^2(t)\right),\quad t\geq 0,
\]
with stable equilibria $u^*_{1,2}=\pm 1$ and the unstable equilibrium $u_3^*=0$. In mathematical biology, it is more natural to generalize the standard logistic growth with a capacity $K>0$ into the bistable models with the so called Allee effect. Roughly speaking, the (demographic) Allee effect captures phenomena in which populations smaller than a viability threshold $a$ with $0<a<K$ go extinct. Mathematically, this leads to the bistable model
\begin{equation}\label{e:Allee}
u'(t)=\lambda u(t)\left(\frac{u(t)}{A}-1\right)\left(1-\frac{u(t)}{K}\right),\quad t\geq 0,
\end{equation}
with the stable equilibria $u^*_{1}=0$ (extinction), $u^*_{2}=K$ (carrying capacity)  and unstable equilibrium $u_3^*=A$ (viability threshold). Numerous examples from applications, list of alternative mathematical approaches, and deep discussions about the Allee effect and nonspatial bistable dynamics can be found in \cite{Courchamp2008}. 



\subsubsection*{Waves in bistable partial differential equations.} Nondimensionalisation of \eqref{e:Allee} and the use of the standard continuous spatial diffusion leads to the widely-studied Nagumo bistable partial differential equation (PDE)
\begin{equation}\label{e:NagumoPDE}
u_t(x,t)=du_{xx}(x,t) + u(x,t)\left(u(x,t)-a\right)\left(1-u(x,t)\right),\quad t\geq 0,\quad x\in\mathbb{R},
\end{equation}
where $d>0$ denotes the diffusion parameter. Initially, this equation has been considered as a simplified model for the pulse propagation  in nerve axons \cite{Nagumo1962}. The PDE has a traveling wave solution which connects the two stable equilibria $u^*_{1}=0$ and $u^*_{2}=1$  \cite{Logan1994}
\begin{equation}\label{e:NagumoPDE:sol}
u(x,t) = \frac{1}{2} + \frac{1}{2}\tanh\left(\frac{x-ct}{2\sqrt{2d}}\right),    
\end{equation}
where $c$ denotes the speed of propagation and satisfies
\[
c=\sqrt{2d} \left(a-\frac{1}{2} \right),
\]
i.e., the wave profile \eqref{e:NagumoPDE:sol} propagates left whenever $a<1/2$, right for $a>1/2$, and is pinned when $a=1/2$, see the left panel of Fig.~\ref{fig:speed:comparison}. 

\begin{figure}
    \centering
    \includegraphics[width=.34\textwidth]{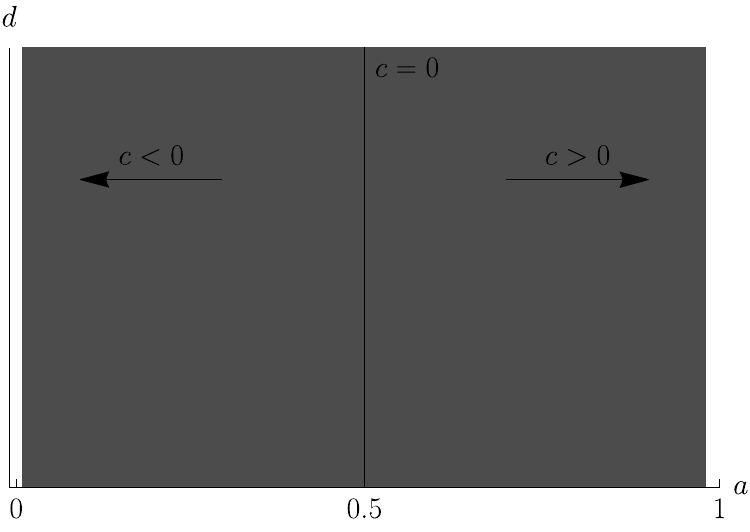}
    \includegraphics[width=.34\textwidth]{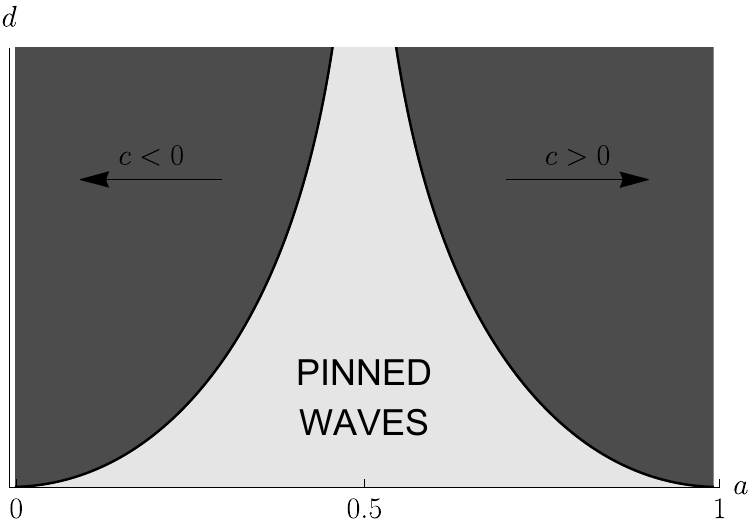}
    \includegraphics[width=.24\textwidth]{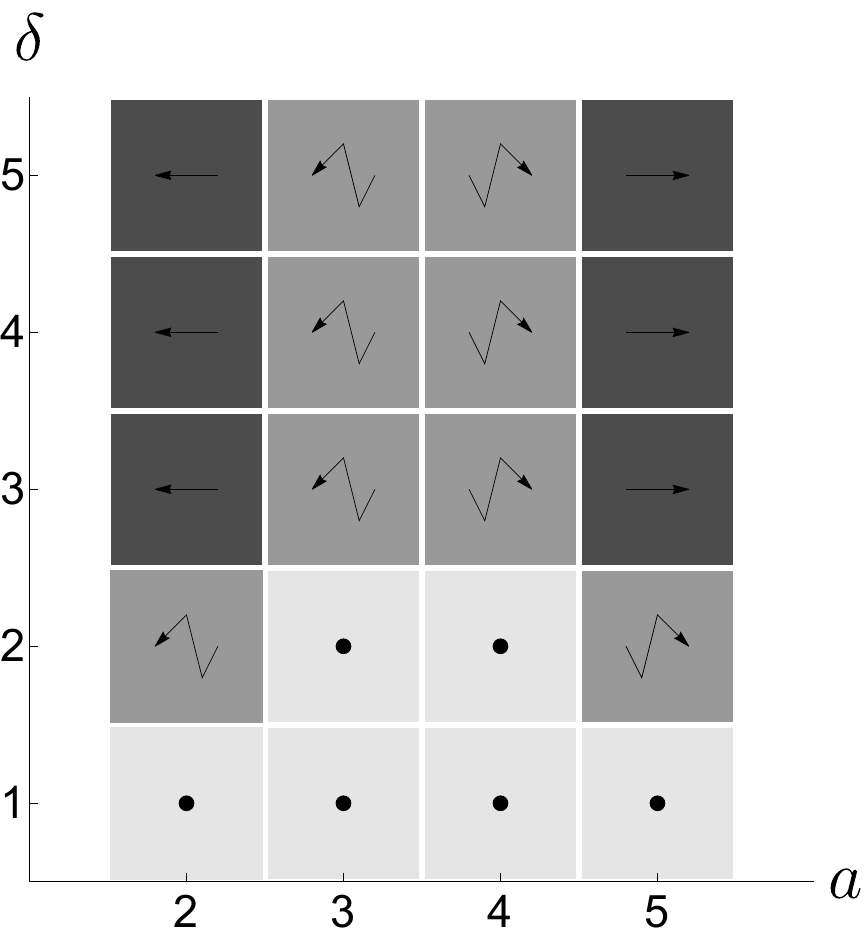}
    \caption{Comparison of properties of traveling waves in $(a,d)$-plane for bistable partial differential equation (left panel), lattice differential equation (central panel) and cellular automaton (right panel). Dark regions represent left or right moving traveling waves, light regions pinned waves and middle gray regions correspond to higher-order waves which are unique to cellular automata.}
    \label{fig:speed:comparison}
\end{figure}

\subsubsection*{Waves in bistable lattice  differential equations.} In many applications (sequence of cells, neurons, spatial habitats) the continuous space $X=\mathbb{R}$ in \eqref{e:NagumoPDE} is naturally replaced by an integer lattice $X=\mathbb{Z}$ and we obtain the Nagumo lattice equation
\begin{equation}\label{e:NagumoLDE}
u'_i(t)=d\left(u_{i-1}(t)-2u_i(t)+u_{i+1}(t) \right) + u_i(t)\left(u_i(t)-a\right)\left(1-u_i(t)\right),\quad t\geq 0,\quad i\in\mathbb{Z}.
\end{equation}
The same equation can be obtained from the PDE \eqref{e:NagumoPDE} by applying standard central difference scheme to the second spatial derivative.

The dynamical properties of the LDE \eqref{e:NagumoLDE} differ significantly from those of the PDE \eqref{e:NagumoPDE}. There is no explicit solution like \eqref{e:NagumoPDE:sol}. For any $a\in(0,1)$ and sufficiently small diffusion $d$ the waves are pinned with $c=0$ and there exists large number of stationary solutions \cite{Keener1987}. For any $d>0$ there exists $\varepsilon>0$ such that $c=0$ whenever $\left\lvert a-1/2\right\rvert<\varepsilon$, see \cite{Chow1998} and the central panel in Fig.~\ref{fig:speed:comparison}. The exact boundaries between $c=0$ and $c\neq 0$ remain unknown and the details of the pinning procedures have been intensively studied in simplified bistable caricatures \cite{Fath1998}.

Alternative propagation schemes have been observed in the pinning regions, e.g., nonmonotone multichromatic waves \cite{Hupkes2019c}. Similar pinning behavior has been studied as well in the models when also time is considered to be discrete (or discretized by the forward Euler discretization scheme), e.g., \cite{Chow1995},

\begin{equation}\label{e:NagumoCML}
u_i(t+1)-u_i(t)=d\left(u_{i-1}(t)-2u_i(t)+u_{i+1}(t) \right) + u_i(t)\left(u_i(t)-a\right)\left(1-u_i(t)\right),\quad t\in \mathbb{N}_0, i\in\mathbb{Z}.
\end{equation}

\subsubsection*{Patterns and waves in bistable cellular automata.} There have been numerous attempts to study patterns and waves in bistable cellular automata, i.e., models in which not only time and space are discrete but also the values of $u$ are reduced to a finite number of states. In many cases, the focus has been on the discretization techniques, standard finite difference schemes \cite{Weimar1997}, the tropical discretization \cite{Murata2013}, or the fine correspondence of three-letter state set model with the behavior of lattice equations \eqref{e:NagumoLDE}--\eqref{e:NagumoCML}, \cite{Chow1996}. The natural presence of propagation phenomena has been identified in reaction-diffusion cellular automata by techniques ranging from simulations \cite{Deutsch2017} to a systematic combinatorial  analysis \cite{Courbage1999}. Numerous wave types include periodic waves \cite{Adamatzky2013, Courbage1997, Schonfisch1995} and spiral waves \cite{Deutsch2017} based on the famous excitable cellular automata by Greenberg and Hastings \cite{Greenberg1978}. Large number \cite{Spale2022} and numerous types \cite{Chow1996} of stationary patterns of reaction-diffusion automata have also been observed in reaction-diffusion automata.

\subsubsection*{Higher-order waves.} In this paper we focus on a class of bistable cellular automata and compare their propagation properties to those of the PDE \eqref{e:NagumoPDE} and the LDE \eqref{e:NagumoLDE}. We show that beyond monotonic moving traveling waves and pinned waves there exists an intermediate region in which there are no moving or pinned traveling waves. The dynamics in this region are driven by a new type of waves which are unique to cellular automata, the \emph{higher order traveling waves}. These are the systems of periodically changing and yet moving profiles. More precisely, a  higher-order $(c,m)$-traveling wave is a set of $m$ profiles which periodically repeat themselves and each periodic occurrence is coupled with a shift/speed $c$, see the right panel of Fig.~\ref{fig:example_higher_order_waves} for illustration.

As far as we know, these higher-order waves connecting stationary solutions have not been described so far. However, periodically repeating and moving patterns (not waves though) are known to exist for cellular automata \cite{Wolfram2002}. For example, there are elementary cellular automata leading to moving patterns, e.g., the rule 2, see Fig.~\ref{fig:rule2} and there are also numerous which lead to the periodically repeating and moving patterns, e.g., the rule 3, see Fig.~\ref{fig:rule3}.


\begin{figure}
    \centering
    \includegraphics[width=.75\textwidth]{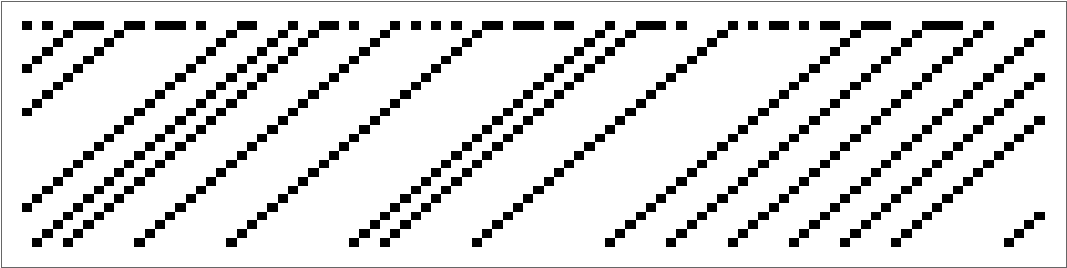}
    \caption{Elementary cellular automaton, rule 2, whose dynamics lead to  moving patterns. The standard visualization of elementary cellular automata is used, i.e., time axis runs from the top, see \cite{Deutsch2017, Wolfram2002}.}
    \label{fig:rule2}
\end{figure}

\begin{figure}
    \centering
    \includegraphics[width=.75\textwidth]{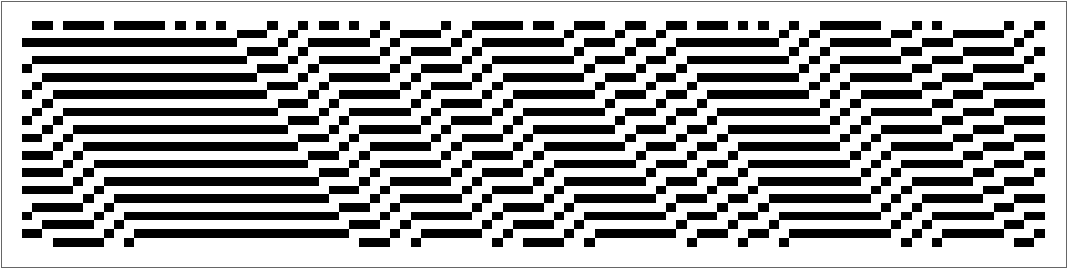}
    \caption{Elementary cellular automaton, rule 3, whose dynamics converge to periodically repeating and moving patterns, two interchanging profiles shift by 1 each second period. This behavior is similar to higher-order waves which we cover in this manuscript.}
    \label{fig:rule3}
\end{figure}







\subsubsection*{Paper Structure.} In Section~\ref{s:preliminaries} we properly define the details of the model of the bistable cellular automaton~\eqref{e:ca:general}. In Section~\ref{s:traveling} we study moving traveling waves and provide a necessary and sufficient condition for their existence in Thm.~\ref{t:ns:left}. In Section~\ref{s:pinned} we then analyze pinned waves and show that there exist parameters for which neither moving nor pinned waves exist. In Section~\ref{s:higher} we show that these parameters yield higher-order traveling waves, profiles which both move and periodically change. We conclude, in Section~\ref{s:examples:TW}, by simulation examples that illustrate existence of moving, pinned, and higher-order waves and their dependence on various parameters.


\section{Reaction-diffusion cellular automata}\label{s:preliminaries}
In this section we briefly introduce basic concepts of bistable reaction-diffusion cellular automata of type \eqref{e:ca:general}. For more details and discussion on alternative approaches, see our companion paper \cite{Spale2022} which deals with stationary patterns.
\subsubsection*{Discrete-state diffusion.}
We define the symmetric discrete-state diffusion $d_{\delta}:(\N_0)^3 \rightarrow \N_0$ with a diffusion parameter $\delta\in\mathbb{N}$ by
\begin{equation}\label{e:d_delta}
    d_{\delta}(m,n,p) = n +\underbrace{h_{\delta}(m,n)}_{\text{left-side dynamics}} + \underbrace{h_{\delta}(p,n)}_{\text{right-side dynamics}},
\end{equation}
where the left- and right-side dynamics are determined by an auxiliary function $h_{\delta}:(\N_0)^2 \rightarrow \N_0$ 
\begin{equation}\label{e:h_delta}
    h_{\delta}(m,n) := \begin{cases}
    \delta, & m-n\geq 2\delta,  \\
    -\delta, & m-n\leq -2\delta, \\
    \delta-1, & m-n\in \{2(\delta-1),2\delta-1\},  \\
    -(\delta-1), & n-m\in \{2(\delta-1),2\delta-1\}, \\
    \vdots \\
    1, & m-n \in \{2,3\},  \\
    -1, & n-m\in \{2,3\}, \\
    0, & \text{otherwise}.  \\
	\end{cases}
\end{equation} 
We then introduce a diffusion automaton $D: \N_0^{\Z} \rightarrow \N_0^{\Z} $ via
\begin{equation}\label{e:diffusion:automaton}
    D(u) = 
    \begin{pmatrix}
    \vdots \\
    d_\delta(u_{x-2},u_{x-1},u_{x}) \\
    d_\delta(u_{x-1},u_{x},u_{x+1})\\
    d_\delta(u_{x},u_{x+1},u_{x+2})\\
    \vdots
    \end{pmatrix},\quad u \in \N_0^{\Z}.
\end{equation}

\subsubsection*{Discrete-state bistable reaction function.} Focusing on a bistable, discrete-state and local (i.e., nonspatial) dynamics $v(t+1)=f(v(t))$, $v(t)\in\N_0$, we introduce bistable maps $f$ with the help of a capacity $K\in\N$ and a viability constant $a\in\N$ satisfying $0<a<K$, see also Fig.~\ref{fig:reaction_waveprofile_comparison} and cf.~\eqref{e:Allee}.  
A function $f:\N_0\rightarrow\N_0$ is said to be a (discrete) bistable reaction function if it is nondecreasing and there exist $a,K\in\N$, $2\leq a \leq K-2$ such that $f(0)=0$, $f(a)=a$, $f(K)=K$, and
\begin{equation}\label{e:bistable}
f(u)\in\begin{cases}
    [0,u)_\Z &\text{if } u\in (0,a)_\Z, \\
    (u,K]_\Z &\text{if } u\in (a,K)_\Z, \\
    [K,u)_\Z &\text{if } u\in (K,\infty)_\Z, \\
    \end{cases}
\end{equation}
where the subscript $\Z$ denotes discrete intervals, e.g., $(0,a)_\Z=(0,a)\cap \Z$. See the left panel of Figure~\ref{fig:reaction_waveprofile_comparison}.


\subsubsection*{Reaction-diffusion cellular automata.} Combining the spatial and local  dynamics (i.e., dif\-fu\-sion~\eqref{e:diffusion:automaton} and reaction~\eqref{e:bistable}), we construct bistable reaction-diffusion automata~\eqref{e:ca:general}. Consider a time scale $\T=\N_0$, one-dimensional discrete lattice $X=\Z$, a state set $S\subset \N_0$ and an update rule $F:S^X\rightarrow S^X$ given by

\begin{equation}\label{e:RDCA:F}
    F(u) = 
    \begin{pmatrix}
    \vdots \\
    d_\delta(f(u_{x-2}),f(u_{x-1}),f(u_{x})) \\
    d_\delta(f(u_{x-1}),f(u_{x}),f(u_{x+1}))\\
    d_\delta(f(u_{x}),f(u_{x+1}),f(u_{x+2}))\\
    \vdots
    \end{pmatrix},\quad u \in S^{\Z},
\end{equation}
with $d_\delta:S^3\rightarrow S$ given by \eqref{e:d_delta} and $f:S\rightarrow S$ being a bistable map (see~\eqref{e:bistable}). We then call the quadruple $\mathcal{C}=(\T,X,S,F)$ the bistable reaction-diffusion cellular automaton (abbreviated by RDCA) and study the induced discrete dynamical system
\begin{equation}\label{e:RDCA:DDS}
u(t+1)=F(u(t)), \quad t\in\T,\ u\in S^\Z.
\end{equation}
We construct the reaction-diffusion automaton via the composition of the reaction and diffusion (as opposed to summing them up as in continuous-state models~\eqref{e:NagumoPDE}, \eqref{e:NagumoLDE}, see the interesting discussion in \cite{Pospisil2020} on this topic). Our motivation stems, among other properties, from the fact that such an approach ensures the interval invariance, see \cite[Rem.~2.3]{Spale2022} for more details.

\section{Moving traveling waves}\label{s:traveling}

In this paper we focus on traveling wave solutions of bistable RDCA~\eqref{e:RDCA:DDS}, i.e., solutions which can be written as
\begin{equation}\label{e:wave:ansatz}
u_n(t)=w_{n-c\cdot t},\quad  t\in \N_0,\quad n\in\Z,
\end{equation}
for some wavespeed $c\in\N$ and an increasing wave profile
\begin{equation}\label{e:wave}
w=(\ldots,w_{-2},w_{-1},w_0,w_1,w_2,\ldots,w_{N},w_{N+1},\ldots)
\end{equation}
with
\begin{equation}\label{e:wave:core}
\ldots=w_{-1}=0=w_0<w_1\leq w_2 \leq w_3 \leq \ldots \leq w_{N-1} < w_{N}=K=w_{N+1}=\ldots,
\end{equation}
for a finite $N \in \N$. We refer to the vector $\wc=(w_1,\ldots,w_{N-1})$ as a \emph{wave core}  and call the value $N-1$ \emph{wave core length}. 

From our Ansatz \eqref{e:wave:ansatz} we derive that the wave profile satisfies
\begin{equation}\label{e:wave:F}
(\dots, w_{-1-c}, w_{-c}, w_{1-c}, \dots) = F\left((\dots, w_{-1}, w_{0}, w_{1}, \dots)\right), \quad c \in \Z.
\end{equation}

We call the profile $w$ given by \eqref{e:wave}--\eqref{e:wave:core} satisfying \eqref{e:wave:F} with $c \in \Z$ a \textit{left traveling wave} for $c<0$, \textit{right traveling wave} for $c>0$, and \textit{pinned traveling wave} for $c=0$. Left or right traveling waves are denoted as \textit{moving traveling waves}, see Fig.~\ref{fig:reaction_waveprofile_comparison}.

\begin{figure}
    \centering
    \includegraphics[width=.45\textwidth]{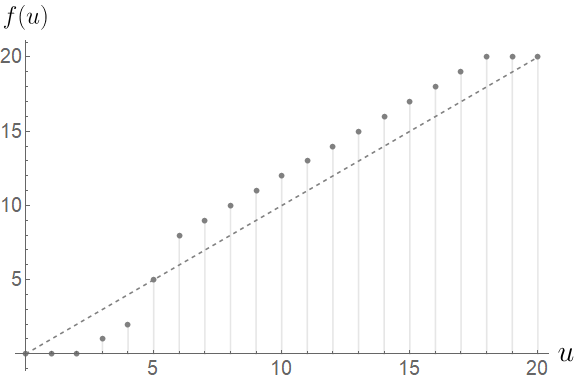}
    \includegraphics[width=.45\textwidth]{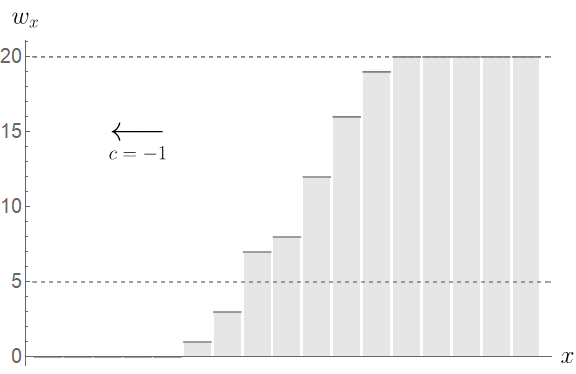}
    \caption{Examples of a bistable reaction function~\eqref{e:bistable} (left panel) and a wave profile~\eqref{e:wave}--\eqref{e:wave:core} (right) with $a=5$ and $K=20$. Throughout the paper we use scatterplots (left) to plot reactions $f$ with the dashed line representing $f(u)=u$. Similarly, we use bar graphs (right) to plot wave profiles with dashed lines illustrating values of $a$ and $K$.}
    \label{fig:reaction_waveprofile_comparison}
\end{figure}

Employing \eqref{e:RDCA:F} we see that the traveling wave satisfies
\begin{equation}\label{e:recurrence:general}
w_{n-c}=f(w_n) + h_\delta(f(w_{n-1}),f(w_n)) + h_\delta(f(w_{n+1}),f(w_n)),\quad n\in\Z.
\end{equation}

It is straightforward to show that $|c|\leq 1$, i.e., there are no `fast' traveling waves with $|c|\geq 2$.

\begin{lemma}[Nonexistence of traveling waves for $|c|\geq 2$]\label{l:nonexistence:fast}
Let $\delta, a, K \in \N$ be such that $2\leq a \leq K-2$ and the bistable reaction function $f$ be defined by \eqref{e:bistable}. Then there are no traveling waves \eqref{e:wave}--\eqref{e:wave:core} in  the bistable RDCA~\eqref{e:RDCA:DDS} with $|c|\geq2$.
\end{lemma}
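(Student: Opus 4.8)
The plan is to exploit a \emph{finite speed of propagation} property of the update rule $F$ and then compare it against the traveling-wave Ansatz \eqref{e:wave:ansatz}. The key observation is that the discrete diffusion $d_\delta$ couples a site only with its two nearest neighbours, and $h_\delta(j,j)=0$ for every $j\in\N_0$ by \eqref{e:h_delta}; hence one application of $F$ leaves the profile unchanged wherever it is already locally constant. In particular, if $w$ is flat on a half-line then so is $F(w)$ on (essentially) the same half-line, whereas the Ansatz with $|c|\ge 2$ would force the flat parts to jump by at least two sites. I would make this precise by evaluating the recurrence \eqref{e:recurrence:general} (equivalently \eqref{e:RDCA:F}, via \eqref{e:wave:F}) at just two indices, one step past each edge of the wave core.

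For the left edge, take $n=-1$. By \eqref{e:wave:core} we have $w_{-2}=w_{-1}=w_0=0$, hence $f(w_{-2})=f(w_{-1})=f(w_0)=f(0)=0$, and therefore \eqref{e:recurrence:general} gives $w_{-1-c}=(F(w))_{-1}=0+h_\delta(0,0)+h_\delta(0,0)=0$. On the other hand $w_1>0$ and $w$ is nondecreasing; so if $c\le -2$ then $-1-c\ge 1$, whence $w_{-1-c}\ge w_1>0$ — a contradiction. Thus there is no left traveling wave with $c\le -2$.

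The right edge is symmetric: take $n=N+1$, so that $w_N=w_{N+1}=w_{N+2}=K$ and $f(K)=K$, whence \eqref{e:recurrence:general} yields $w_{N+1-c}=(F(w))_{N+1}=K+h_\delta(K,K)+h_\delta(K,K)=K$. But $w_{N-1}<K$ by \eqref{e:wave:core}; so if $c\ge 2$ then $N+1-c\le N-1$ and monotonicity gives $w_{N+1-c}\le w_{N-1}<K$ — again a contradiction. Together with the previous paragraph this rules out all $|c|\ge 2$.

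I do not expect a genuine obstacle here: the argument is a short bookkeeping of indices, and the only structural input needed from the kernel is the trivial identity $h_\delta(j,j)=0$. The one point to be careful about is the choice of evaluation indices — picking $n=-1$ and $n=N+1$ (exactly one step outside the core) is what makes all three arguments fed into $d_\delta$ equal to the same boundary value, $0$ on the left and $K$ on the right, so that both the reaction and the diffusion contributions vanish; evaluating at $n=0$ or $n=N$, or anywhere inside the core, would not close the argument as cleanly.
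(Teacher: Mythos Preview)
Your argument is correct and follows essentially the same approach as the paper: evaluate the recurrence \eqref{e:recurrence:general} in the flat tail of the profile, where all three inputs agree and the right-hand side reduces to the constant boundary value, and contrast this with what the shift by $c$ would force. The paper spells out only the case $c=-2$ (showing $w\equiv 0$ by induction from the zero tail) and waves at ``similar arguments'' for the rest, whereas you treat general $c\le -2$ and $c\ge 2$ directly at the two edges $n=-1$ and $n=N+1$; this is a tidier packaging of the same idea.
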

\begin{proof}
Assume by contradiction that there exists a left traveling wave with $c=-2$, i.e., 
\[
(\dots, w_{1}, w_{2}, w_{3}, \dots) = F\left((\dots, w_{-1}, w_{0}, w_{1}, \dots)\right).
\]
Equations \eqref{e:d_delta} and \eqref{e:RDCA:F} then imply the system of following equations
\[
w_{n+2}=f(w_n) + h_\delta(f(w_{n-1}),f(w_n)) + h_\delta(f(w_{n+1}),f(w_n)), \quad n \in \Z.    
\]
If $(w_{n-1}, w_{n}, w_{n+1})=(0,0,0)$, then $w_{n+2} = 0$ and we can repeat the argument for all $n \in \Z$ to get that $w\equiv 0$, a contradiction with \eqref{e:wave:core}.

Similar arguments can be used in order to prove the same for all $|c|\geq2$.
\end{proof}

Straightforwardly, we can only focus on left traveling waves with negative speed $c<0$, the results for right-traveling waves with positive speed $c>0$ translate automatically.

\begin{lemma}[Existence of right traveling waves with increasing profiles]\label{l:right:traveling}
Let $\delta, a, K \in \N$ be such that $2\leq a \leq K-2$ and the bistable reaction function $f$ be defined by \eqref{e:bistable}. A wave profile $w$ \eqref{e:wave}--\eqref{e:wave:core} satisfies \eqref{e:wave:F} with $c =-1$ if and only if the configuration $\widehat{w}$  defined by $\widehat{w}_n=K-w_{N-n}$
is a right traveling wave of the form \eqref{e:wave}--\eqref{e:wave:core} of RDCA \eqref{e:RDCA:DDS} with $\widehat{f}(u)=K-f(K-u)$ for all $u\in(0,K)_\Z$, i.e., satisfies \eqref{e:wave:F} with $\widehat{c}=1$.
\end{lemma}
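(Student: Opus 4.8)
The plan is to establish this as a symmetry statement: the transformation $\widehat{w}_n = K - w_{N-n}$ is an involution that conjugates the left-traveling dynamics of $\mathcal C$ with the right-traveling dynamics of the reflected automaton $\widehat{\mathcal C}$ built from $\widehat f$. First I would verify that $\widehat f(u) = K - f(K-u)$ is itself a legitimate bistable reaction function in the sense of \eqref{e:bistable}: monotonicity of $f$ gives monotonicity of $\widehat f$, the fixed points $0, a, K$ of $f$ map to the fixed points $0, K-a, K$ of $\widehat f$ (so the viability constant is $\widehat a = K - a$, which still satisfies $2 \le \widehat a \le K-2$), and the three interval conditions in \eqref{e:bistable} transform into each other under $u \mapsto K-u$. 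Likewise I would check that $\widehat w$ defined from a profile of the form \eqref{e:wave}--\eqref{e:wave:core} is again of that form: the chain $0 = w_0 < w_1 \le \dots < w_N = K$ reverses to $0 = \widehat w_0 < \widehat w_1 \le \dots < \widehat w_N = K$, with the same core length $N-1$, and the boundary tails $w_{-1} = 0$, $w_{N+1} = K$ translate to $\widehat w_{-1} = 0$, $\widehat w_{N+1} = K$.

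The core computation is to show the recurrence \eqref{e:recurrence:general} is invariant under the reflection. The key algebraic fact is the antisymmetry of $h_\delta$ in the combined sense that $h_\delta(K-p, K-q) = -h_\delta(p,q)$, equivalently $h_\delta(p,q) = -h_\delta(q,p)$, which is immediate from the definition \eqref{e:h_delta} since the cases are governed by the sign and magnitude of $p-q$ and the first two (and every paired) branch flip sign when $p-q$ flips sign. Using this, I would start from the assumption that $w$ satisfies \eqref{e:wave:F} with $c=-1$, i.e.
\[
w_{n+1} = f(w_n) + h_\delta(f(w_{n-1}), f(w_n)) + h_\delta(f(w_{n+1}), f(w_n)), \qquad n \in \Z,
\]
apply $K - (\cdot)$ to both sides, substitute $n \mapsto N - n$, and rewrite everything in terms of $\widehat w$ and $\widehat f$: the left side becomes $K - w_{N-n+1} = \widehat w_{n-1}$, the term $K - f(w_{N-n})$ becomes $\widehat f(\widehat w_n)$, and each diffusion term $-h_\delta(f(w_{N-n\mp1}), f(w_{N-n}))$ becomes $h_\delta(\widehat f(\widehat w_{n\pm1}), \widehat f(\widehat w_n))$ after invoking $\widehat f(u) = K - f(K-u)$ and the antisymmetry. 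The resulting identity
\[
\widehat w_{n-1} = \widehat f(\widehat w_n) + h_\delta(\widehat f(\widehat w_{n-1}), \widehat f(\widehat w_n)) + h_\delta(\widehat f(\widehat w_{n+1}), \widehat f(\widehat w_n)), \qquad n \in \Z,
\]
is exactly \eqref{e:recurrence:general} for $\widehat w$ with $\widehat c = 1$, i.e. \eqref{e:wave:F} with $\widehat c = 1$. Since the reflection $w \mapsto \widehat w$ is an involution (applied to $\widehat{\mathcal C}$ it returns $\mathcal C$) and the computation is reversible, this gives the ``if and only if''.

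I expect the only real subtlety — hardly an obstacle — to be bookkeeping: keeping the index shift $n \mapsto N-n$ consistent across all three terms of the recurrence and making sure the neighbor indices $n-1, n+1$ swap roles correctly, together with confirming that the argument of $\widehat f$ is always in the admissible range (which is why the statement restricts to $u \in (0,K)_\Z$, the tails being handled separately by the fixed-point values). Everything else is a direct substitution once the antisymmetry $h_\delta(p,q) = -h_\delta(q,p)$ and the identity $h_\delta(K-p,K-q) = -h_\delta(p,q)$ are in hand, so I would state those two facts first as a short observation and then let the main equivalence follow by the reflection-and-relabel argument above.
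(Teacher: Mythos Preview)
Your proposal is correct and follows essentially the same route as the paper: both check that $\widehat f$ is bistable with $\widehat a = K-a$, then substitute $\widehat w_n = K - w_{N-n}$ into the recurrence \eqref{e:recurrence:general} with $c=-1$, invoke the antisymmetry $h_\delta(K-p,K-q) = -h_\delta(p,q)$ (which the paper uses implicitly in its chain of equalities), and read off the $\widehat c = 1$ recurrence for $\widehat w$, with the reverse implication following by symmetry. You are simply a bit more explicit than the paper about verifying that $\widehat w$ is of the form \eqref{e:wave}--\eqref{e:wave:core} and about stating the $h_\delta$ identity upfront.
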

\begin{proof}
    First, let us observe that $\widehat{f}$ is a bistable reaction function~\eqref{e:bistable} with $\widehat{a}=K-a$.

    If $w$ is a left traveling with $c=-1$, then the equation \eqref{e:wave:F} implies
    \begin{equation}\label{e:right:traveling:pr}
        w_{n+1}=f(w_n) + h_\delta(f(w_{n-1}),f(w_n)) + h_\delta(f(w_{n+1}),f(w_n)), \quad n \in \Z.
    \end{equation}
    Applying $\widehat{f}(u)=K-f(K-u)$ and $\widehat{w}_n=K-w_{N-n}$ we get for all $n\in\Z$
    \begin{align*}
        \widehat{w}_{n-1}&=K-w_{N-n+1} \\
        &=K-f(w_{N-n})-h_\delta(f(w_{N-n-1}), f(w_{N-n}))-h_\delta(f(w_{N-n+1}), f(w_{N-n})) \\
        &=K-(K-\widehat{f}(\widehat{w}_n))-h_\delta(K-\widehat{f}(\widehat{w}_{n+1}), K-\widehat{f}(\widehat{w}_{n}))-h_\delta(K-\widehat{f}(\widehat{w}_{n-1}), K-\widehat{f}(\widehat{w}_{n}))\\
        &=\widehat{f}(\widehat{w}_n)+h_\delta(\widehat{f}(\widehat{w}_{n+1}), \widehat{f}(\widehat{w}_{n}))+h_\delta(\widehat{f}(\widehat{w}_{n-1}), \widehat{f}(\widehat{w}_{n})),
    \end{align*}
    which shows that $\widehat{w}$ satisfies \eqref{e:wave}--\eqref{e:wave:core} and \eqref{e:wave:F} with $c =1$. The reverse implication follows by the same computation.
\end{proof}

Similar arguments yield also existence of solutions with decreasing profiles

\begin{lemma}[Existence of right traveling waves with decreasing profiles]\label{l:right:traveling:dec}
Let $\delta, a, K \in \N$ be such that $2\leq a \leq K-2$ and the bistable reaction function $f$ be defined by \eqref{e:bistable}. A wave profile $w$ \eqref{e:wave}--\eqref{e:wave:core} satisfies \eqref{e:wave:F} with $c =-1$ if and only if the reverse configuration $\tilde{w}$ 
\begin{equation}\label{e:wave:reverse}
\tilde{w}=(\ldots,w_{N+1},w_{N},\ldots,w_{2},w_{1},w_0,w_{-1},w_{-2},\ldots),
\end{equation}
is a right traveling wave, i.e., satisfies \eqref{e:wave:F} with $\tilde{c}=1$.
\end{lemma}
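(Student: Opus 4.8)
The plan is to read the lemma off as the spatial-reflection symmetry of the update rule. The key structural fact is that the discrete-state diffusion $d_\delta$ of \eqref{e:d_delta} is symmetric in its two outer arguments, $d_\delta(m,n,p)=d_\delta(p,n,m)$, so by \eqref{e:RDCA:F} the operator $F$ commutes with reversing the order of the lattice. The reverse configuration $\tilde w$ of \eqref{e:wave:reverse} is, up to an irrelevant shift of the indexing, precisely $w$ with the lattice flipped, so the recurrence \eqref{e:recurrence:general} for $\tilde w$ with $\tilde c=1$ should collapse onto the recurrence for $w$ with $c=-1$ after a change of the summation variable — the crucial point being that under the flip the two neighbour terms $h_\delta(f(\cdot),f(\cdot))$ merely swap roles and, being added, contribute the same sum.

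First I would fix the indexing once and for all by $\tilde w_n:=w_{-n}$ for $n\in\Z$; any indexing compatible with \eqref{e:wave:reverse} differs from this one by a spatial translation, and translates of a solution of \eqref{e:wave:F} are again solutions with the same speed since $F$ is shift-equivariant, so no generality is lost. From \eqref{e:wave:core} one then reads off immediately that $\tilde w$ is a well-defined nonincreasing configuration, constant equal to $K$ for $n\le -N$ and constant equal to $0$ for $n\ge 0$, so that calling it a right traveling wave is meaningful.

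Next I would spell out both conditions. By \eqref{e:recurrence:general} (equivalently \eqref{e:right:traveling:pr}), ``$w$ satisfies \eqref{e:wave:F} with $c=-1$'' reads
\[
w_{n+1}=f(w_n)+h_\delta(f(w_{n-1}),f(w_n))+h_\delta(f(w_{n+1}),f(w_n)),\qquad n\in\Z,
\]
while ``$\tilde w$ satisfies \eqref{e:wave:F} with $\tilde c=1$'' reads, again by \eqref{e:recurrence:general},
\[
\tilde w_{n-1}=f(\tilde w_n)+h_\delta(f(\tilde w_{n-1}),f(\tilde w_n))+h_\delta(f(\tilde w_{n+1}),f(\tilde w_n)),\qquad n\in\Z.
\]
Substituting $\tilde w_n=w_{-n}$ into the second line and relabelling $m:=-n$ turns it into $w_{m+1}=f(w_m)+h_\delta(f(w_{m+1}),f(w_m))+h_\delta(f(w_{m-1}),f(w_m))$ for every $m\in\Z$, which is the first line after reordering the two summands. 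Every manipulation is an equivalence, so this settles both implications at once.

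The ``hard'' part here is purely clerical: keeping the substitutions $n\mapsto -n$ and $n\mapsto n\pm1$ straight, and making explicit that the base point left undetermined by \eqref{e:wave:reverse} is immaterial by shift-equivariance of $F$. There is no analytic content — the statement is just the spatial symmetry of $d_\delta$ transported to the level of traveling-wave profiles, in contrast with Lemma~\ref{l:right:traveling}, which additionally reflects the state variable.
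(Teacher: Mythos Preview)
Your argument is correct and follows essentially the same route as the paper: both exploit the symmetry $d_\delta(m,n,p)=d_\delta(p,n,m)$ to show that the recurrence \eqref{e:recurrence:general} with $c=-1$ for $w$ is, after the spatial reflection and an index relabelling, precisely the recurrence with $\tilde c=1$ for $\tilde w$. The only cosmetic difference is that the paper anchors the reversed profile at $N$ (i.e., $\tilde w_{N-n}=w_n$) and computes directly, whereas you set $\tilde w_n=w_{-n}$ and invoke shift-equivariance of $F$ to absorb the choice of base point; your presentation is in fact a bit more explicit about why both implications hold simultaneously.
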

\begin{proof}
From~\eqref{e:right:traveling:pr} the reverse configuration $\tilde{w}$ immediately satisfies
\begin{align*}
\tilde{w}_{N-n-1}&= w_{n+1}\\
&=f(w_n) + h_\delta(f(w_{n-1}),f(w_n)) + h_\delta(f(w_{n+1}),f(w_n))\\
&=f(\tilde{w}_{N-n}) + h_\delta(f(\tilde{w}_{N-n+1}),f(\tilde{w}_{N-n})) + h_\delta(f(\tilde{w}_{N-n+1}),f(\tilde{w}_{N-n})).\\
\end{align*}
The reverse implication can be shown equivalently.
\end{proof}

Radial waves thus exist naturally in the cellular automaton settings.

\begin{example}[Radial waves]\label{x:radial}
If $w$ given by \eqref{e:wave}--\eqref{e:wave:core} is a left traveling wave with $c=-1$ then it follows immediately from Lem.~\ref{l:right:traveling:dec} that the profile composed of the `left increasing' and `right decreasing' profile
\[w=(\dots,0,0=w_0,w_1,w_2,\dots,w_{N-1},w_N=K,K,\dots,K,w_{N-1},\dots,w_2,w_1,0,0,\dots)\]
will propagate both to the left and right side as radial waves, see the left panel of Fig.~\ref{fig:ex_radial_wave}. Similarly, we can construct radial wave from `left decreasing' and `right increasing' profiles, see the right panel in Fig.~\ref{fig:ex_radial_wave}. Their existence follows from Lemmas~\ref{l:right:traveling} and~\ref{l:right:traveling:dec}.

\begin{figure}
    \centering
    \includegraphics[width=.45\textwidth]{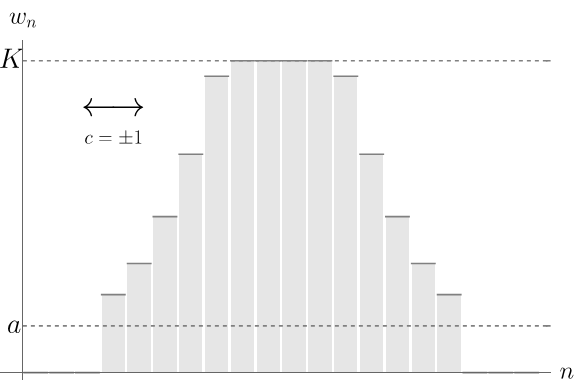}
    \includegraphics[width=.45\textwidth]{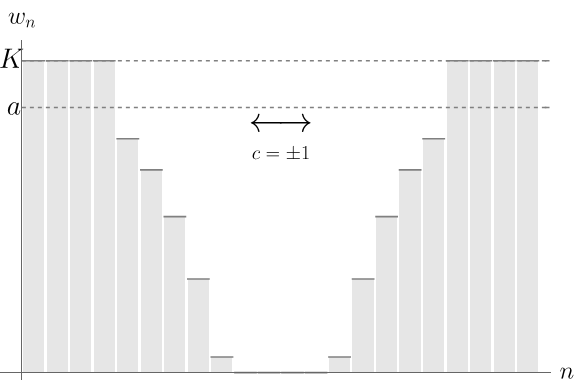}
    \caption{Radial waves from Ex.~\ref{x:radial}. Illustration of Lemmas~\ref{l:right:traveling} and~\ref{l:right:traveling:dec}.}
    \label{fig:ex_radial_wave}
\end{figure}

\end{example}

Our key insight regarding bistable RDCA is the following necessary conditions on the first wave core value $w_1$.

\begin{lemma}[Necessary condition on $w_1$]\label{l:necessary:w1}
Let $\delta, a, K \in \N$ be such that $2\leq a \leq K-2$ and the bistable reaction function $f$ be defined by \eqref{e:bistable}. If there exists a left traveling wave \eqref{e:wave}--\eqref{e:wave:core} of bistable RDCA~\eqref{e:RDCA:DDS} with $c=-1$, then 
\begin{equation}
\label{e:left:NC1}
    \delta\geq w_1 > a
\end{equation}
and
\begin{equation}
\label{e:left:NC2}
    f(w_1)\geq 2w_1.
\end{equation}
\end{lemma}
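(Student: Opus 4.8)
The plan is to exploit the recurrence \eqref{e:recurrence:general} with $c=-1$, which reads
\[
w_{n+1}=f(w_n)+h_\delta\bigl(f(w_{n-1}),f(w_n)\bigr)+h_\delta\bigl(f(w_{n+1}),f(w_n)\bigr),\quad n\in\Z,
\]
evaluated at $n=0$ and $n=1$, together with the monotonicity of $f$ and the boundary/core normalization \eqref{e:wave:core}. First I would plug in $n=0$: since $w_{-1}=w_0=0$ we have $f(w_{-1})=f(w_0)=0$, so $h_\delta(f(w_{-1}),f(w_0))=h_\delta(0,0)=0$, and the equation collapses to
\[
w_1=h_\delta\bigl(f(w_1),0\bigr).
\]
Now $w_1\ge 1>0$, so the left-hand side is positive; by the definition \eqref{e:h_delta} the only way $h_\delta(f(w_1),0)$ can be positive is if $f(w_1)\ge 2$ and the value equals $k$ where $2k\le f(w_1)\le 2k+1$ for some $1\le k\le\delta$. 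Hence $w_1=k\le\delta$, giving the bound $w_1\le\delta$, and $f(w_1)=f(w_1)\ge 2k=2w_1$, which is exactly \eqref{e:left:NC2}. This already disposes of \eqref{e:left:NC2} and of the upper inequality in \eqref{e:left:NC1} in one stroke.

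It remains to prove $w_1>a$. Here I would argue by contradiction: suppose $w_1\le a$. By \eqref{e:bistable}, for $0<u<a$ we have $f(u)<u$, i.e. $f(u)\le u-1$, and $f(a)=a$; in either case $f(w_1)\le w_1\le a$. But we have just shown $f(w_1)\ge 2w_1$, so $2w_1\le w_1$, forcing $w_1\le 0$, contradicting $w_1\ge 1$. Thus $w_1>a$, completing \eqref{e:left:NC1}. (One should double-check the edge case $w_1=a$ separately: then $f(w_1)=a=w_1<2w_1$ since $w_1\ge 2$, again contradicting \eqref{e:left:NC2}; so indeed $w_1>a$ strictly.)

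The only mild subtlety — the part I would be most careful about — is reading off from \eqref{e:h_delta} exactly which inputs make $h_\delta(p,0)$ equal to a given positive value, since the cases there are stated as the nested intervals $m-n\in\{2(\delta-1),2\delta-1\},\ \{2(\delta-2),2\delta-3\},\dots,\{2,3\}$ together with the two extreme cases $m-n\ge 2\delta$ and $m-n\le -2\delta$. The clean way to phrase it is: $h_\delta(p,n)=\max\{-\delta,\min\{\delta,\lfloor (p-n)/2\rfloor\}\}$ when $p\ge n$ (and the antisymmetric statement otherwise), so $h_\delta(f(w_1),0)=w_1$ with $1\le w_1\le\delta$ is equivalent to $2w_1\le f(w_1)$ and $w_1\le\delta$. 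I would state and use this floor-formula reformulation of \eqref{e:h_delta} — or just verify the needed implication directly from the displayed cases — and then the two displayed inequalities \eqref{e:left:NC1}–\eqref{e:left:NC2} follow immediately as above.
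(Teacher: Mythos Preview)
Your proposal is correct and follows essentially the same approach as the paper: both proofs specialize the recurrence \eqref{e:recurrence:general} at $n=0$ to obtain $w_1=h_\delta(f(w_1),0)$ and then read off the three inequalities from the structure of $h_\delta$. The only cosmetic difference is the order of derivation---the paper first proves $w_1>a$ directly via $h_\delta(f(w_1),0)\le f(w_1)/2<w_1$ and then \eqref{e:left:NC2}, whereas you establish \eqref{e:left:NC2} first and feed it back to force $w_1>a$; both routes are equally short. (Your mention of also using $n=1$ in the opening plan is never needed and can be dropped.)
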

\begin{proof}   
We observe that the definition of $h_\delta$ \eqref{e:h_delta} implies the inequality
\begin{equation}\label{l:Allee:Ncondition:p}
    h_\delta(f(w_1),0)\leq \frac{f(w_1)}{2}.
\end{equation}

Firstly, we provide the proof for inequality \eqref{e:left:NC1}. The former inequality in \eqref{e:left:NC1},  follows immediately from the definition of $h_\delta$ in Eq.~\eqref{e:h_delta}
\[
w_1=h_\delta(f(w_1),0) \leq \delta.
\]

Let us show the latter inequality in \eqref{e:left:NC1}, i.e., that $w_1>a$ holds. We assume by contradiction that $w_1\leq a$. Then Eq.~\eqref{e:recurrence:general} with $c=1$ implies for $n=0$
\[
w_1=f(w_0)+h_\delta(f(w_{-1}),f(w_0))+h_\delta(f(w_1),f(w_0))=0+0+h_\delta(f(w_1),0)=h_\delta(f(w_1),0).
\]
This is a contradiction with \eqref{l:Allee:Ncondition:p}, since $f(w_1)\leq w_1$ for $w_1\in(0,a]_\Z$ implies
\[
h_\delta(f(w_1),0)\leq \frac{f(w_1)}{2}<w_1.
\]

In order to prove the inequality \eqref{e:left:NC2}, we use the inequality \eqref{l:Allee:Ncondition:p} again to get
\[
w_1=h_\delta(f(w_1),0) \leq \frac{f(w_1)}{2},
\]
which is equivalent to  $f(w_1)\geq 2w_1$.
 
\end{proof}

We can now fully characterize parametric values for which there exist left traveling waves of bistable RDCA. Moreover, we provide a constructive proof.
\begin{theorem}[Necessary and sufficient condition left traveling waves]\label{t:ns:left}
Let $\delta, a, K \in \N$ be such that $2\leq a \leq K-2$ and the bistable reaction function $f$ be defined by \eqref{e:bistable}. There exists a left traveling wave \eqref{e:wave}--\eqref{e:wave:core} of bistable RDCA~\eqref{e:RDCA:DDS} with $c=-1$ if and only if $a<\frac{K}{2}$ and there exists $p\in\left(a,\frac{K}{2}\right]_\mathbb{Z}$ satisfying
\begin{equation}
    \delta\geq p,  \label{e:p:value}  
\end{equation}
and 
\begin{equation}
   f(p)\geq 2p. \label{e:p:function}
\end{equation}
\end{theorem}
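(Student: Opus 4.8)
The plan is to prove both directions by exploiting the necessary condition from Lemma~\ref{l:necessary:w1} together with an explicit construction of a wave core. For the forward implication, suppose a left traveling wave $w$ with $c=-1$ exists. Lemma~\ref{l:necessary:w1} already gives $\delta \geq w_1 > a$ and $f(w_1) \geq 2w_1$, so setting $p := w_1$ immediately yields \eqref{e:p:value} and \eqref{e:p:function} provided we can also show $p \leq K/2$ (and hence, since $p>a$, that $a < K/2$). The bound $p \leq K/2$ should follow from \eqref{e:left:NC2} and the interval invariance: $f(w_1) \leq K$ because $f$ maps into $S \subseteq [0,K]$ for admissible arguments (indeed $w_1 \in (a,K)_\Z$ forces $f(w_1) \in (w_1, K]_\Z$ by \eqref{e:bistable}), so $2w_1 \leq f(w_1) \leq K$, giving $w_1 \leq K/2$.

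For the converse, assume $a < K/2$ and fix $p \in (a, K/2]_\Z$ with $\delta \geq p$ and $f(p) \geq 2p$. The goal is to exhibit an explicit increasing profile $w$ satisfying \eqref{e:wave}--\eqref{e:wave:core} and the recurrence \eqref{e:recurrence:general} with $c=-1$, i.e.
\[
w_{n+1} = f(w_n) + h_\delta(f(w_{n-1}), f(w_n)) + h_\delta(f(w_{n+1}), f(w_n)), \quad n \in \Z.
\]
The natural candidate is the two-value ``step'' profile with wave core length one: $w_0 = 0$, $w_1 = p$, $w_2 = w_3 = \cdots = K$ (and $w_n = 0$ for $n \leq 0$). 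I would then verify the recurrence at each relevant index. At $n=0$: the right-hand side is $f(0) + h_\delta(f(w_{-1}),0) + h_\delta(f(p),0) = 0 + 0 + h_\delta(f(p),0)$, and since $\delta \geq p$ and $2p \leq f(p)$, the definition \eqref{e:h_delta} of $h_\delta$ gives $h_\delta(f(p),0) = p$ exactly (here one checks $f(p) \geq 2p$ puts us in a branch evaluating to at least $p$, while $\delta \geq p$ and $f(p)\le K \le \dots$ cap it; some care is needed to land precisely on $p$ — this may require choosing $p = w_1$ more cleverly or noting $h_\delta$ is ``saturating'' so the value is $\min(\delta, \lfloor f(p)/2 \rfloor)$-ish). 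At $n=1$: RHS $= f(p) + h_\delta(0, f(p)) + h_\delta(f(K), f(p)) = f(p) - p + h_\delta(K, f(p))$, and since $f(p) \leq K$ one expects this to equal $K$; the gap $K - f(p)$ must be absorbed by $h_\delta(K,f(p))$, which again requires $K - f(p) \geq$ appropriate thresholds or $\le \delta$. At $n \geq 2$ all three arguments are $K$ (and $f(K)=K$), so RHS $=K$, consistent. At $n \leq -1$ everything is $0$.

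The main obstacle will be the bookkeeping in the two boundary equations ($n=0$ and $n=1$): I need $h_\delta(f(p),0)$ to equal \emph{exactly} $p$ and $h_\delta(K, f(p))$ to equal \emph{exactly} $K - f(p)$, and the piecewise definition \eqref{e:h_delta} only guarantees these equalities under inequalities relating $p$, $f(p)$, $K$, $\delta$ to the thresholds $2j$ and $2j-1$. If the simple two-value profile fails to satisfy both simultaneously for a given valid $p$, the fix is to insert an intermediate plateau — e.g. use wave core $(p, 2p, \ldots)$ or stretch the profile so each jump is small enough for $h_\delta$ to behave as a clean ``diffusion of size $\le\delta$'' — and then verify the recurrence on the longer core by induction. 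Since $f$ is nondecreasing and bistable, and $\delta \geq p$ controls jump sizes near the left tail, such a construction should close; I would present the cleanest profile that works (likely $w_1=p$ with possibly one or two interpolating values before reaching $K$) and check \eqref{e:recurrence:general} case by case, invoking Lemma~\ref{l:right:traveling} only implicitly if a symmetric viewpoint simplifies the $n=1$ computation.
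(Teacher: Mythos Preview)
Your forward (necessary) direction is fine and matches the paper: Lemma~\ref{l:necessary:w1} gives $\delta\ge w_1>a$ and $f(w_1)\ge 2w_1$, and $f(w_1)\le K$ forces $w_1\le K/2$, so $p:=w_1$ lies in $(a,K/2]_\Z$.

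The sufficient direction has a genuine gap. Your candidate profile with core $(p)$ requires $h_\delta(f(p),0)=p$ \emph{exactly}, but from \eqref{e:h_delta} one has $h_\delta(f(p),0)=\min\bigl(\delta,\lfloor f(p)/2\rfloor\bigr)$, and the hypotheses $\delta\ge p$, $f(p)\ge 2p$ only give $h_\delta(f(p),0)\ge p$, with strict inequality whenever both $\delta>p$ and $f(p)\ge 2p+2$. So already at $n=0$ the recurrence fails in general, and your $n=1$ computation inherits the same mismatch. You recognize this, but the proposed remedy---``insert an intermediate plateau'' or ``stretch the profile''---is not a proof: you have not specified what values to insert, why the resulting sequence is nondecreasing, why it reaches $K$ in finitely many steps, or why each inserted value satisfies its own instance of \eqref{e:recurrence:general}. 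In particular, taking $w_1=p$ is itself wrong in general; the correct first core value need not be $p$.

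The idea you are missing is a discrete intermediate-value construction. The paper defines, for each $n\ge 0$, a discrepancy function
\[
\psi_n(m)=f(w_n)-h_\delta(f(w_n),f(w_{n-1}))+h_\delta(f(m),f(w_n))-m
\]
(and $\psi_0(m)=h_\delta(f(m),0)-m$), shows $\psi_n(w_n)\ge 0$ and $\psi_n(K)\le 0$, and---crucially---uses that $\psi_n$ can drop by at most $1$ when $m$ increases by $1$ (since $h_\delta$ is nondecreasing in its first argument) to locate a zero $w_{n+1}\in[w_n,K]_\Z$. This is what produces the entire core, guarantees monotonicity, and shows the construction terminates (no two consecutive core values can stagnate below $K$). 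None of this machinery is present in your proposal, and an ad hoc two- or three-step profile cannot replace it.
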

\begin{proof}
First, let us assume that the inequalities \eqref{e:p:value}--\eqref{e:p:function} are satisfied and let us construct the left traveling wave.
\subsubsection*{Construction of $w_1$.} First, we find $w_1>0$ which satisfies
\[
w_{1}=f(w_0) + h_\delta(f(w_{-1}),f(w_0)) + h_\delta(f(w_{1}),f(w_0)) = h_\delta(f(w_{1}),0).
\]
For this purpose we define an auxiliary function $\psi_0:[p,K]_\mathbb{Z}\rightarrow \mathbb{Z}$ by
\begin{equation}\label{e:psi0}
 \psi_0(m):=h_\delta(f(m),0) - m.
\end{equation}
Obviously, our goal is to find $w_1$ such that $\psi_0(w_1)=0$. 

For $m=p$ we can use the inequalities \eqref{e:p:value}--\eqref{e:p:function} to obtain
\[
\psi_0(p)=h_\delta(f(p),0) - p  \geq h_\delta(2p,0)-p \geq p-p =0.
\]
Similarly for $m=K$, we obtain the reverse inequality
\[
\psi_0(K)=h_\delta(f(K),0) - K = h_\delta(K,0) - K \leq \frac{K}{2}-K = - \frac{K}{2} < 0.
\]

If $\psi_0(p)=0$, then we can set $w_1=p$ as $w_{1}=h_\delta(f(w_{1}),0)$ is fulfilled. Otherwise, we get $\psi_0(p)>0$ and we determine $w_1$ using
\[
M:=\max \{m\in[p,K)_\mathbb{Z}: \psi_0(m)>0\}.
\]
Then we get
\[
\psi_0(M+1) = h_\delta(f(M+1),0) - M-1 \geq h_\delta(f(M),0) - M-1 \geq \psi_0(M)-1 \geq 0,
\]
which implies that $\psi_0(M+1)=0$ and we set $w_1=M+1$.

\subsubsection*{Construction of $w_{n+1}$, $n\in  \mathbb{N}$.} We use similar principles to construct all other terms of the increasing wave profile \eqref{e:wave:ansatz}--\eqref{e:wave}. Let us fix $n\in\mathbb{N}$ and define the function $\psi_n:[w_n,K]_\mathbb{Z}\rightarrow\mathbb{Z}$ by

\begin{equation}\label{e:psin}
\psi_n(m):=f(w_n) - h_\delta(f(w_{n}),f(w_{n-1})) + h_\delta(f(m),f(w_n)) - m.
\end{equation}

 First, we show that $\psi_n(w_n)\geq 0$. This follows from the following estimates
 \begin{align*}
     \psi_n(w_n) &= f(w_n) - h_\delta(f(w_{n}),f(w_{n-1})) + h_\delta(f(w_{n}),f(w_n)) - w_n \\
         &= f(w_n) - h_\delta(f(w_{n}),f(w_{n-1})) - w_n \\
         &=f(w_n) - h_\delta(f(w_{n}),f(w_{n-1})) - f(w_{n-1})+ \\
         & \quad+h_\delta(f(w_{n-1}),f(w_{n-2}))-h_\delta(f(w_{n}),f(w_{n-1}))\\
         &=f(w_n) -f(w_{n-1}) - 2h_\delta(f(w_{n}),f(w_{n-1}))+ h_\delta(f(w_{n-1}),f(w_{n-2}))\\
         &\geq f(w_n) -f(w_{n-1}) - 2 \cdot \frac{f(w_n) -f(w_{n-1})}{2} + h_\delta(f(w_{n-1}),f(w_{n-2}))\\
         &= h_\delta(f(w_{n-1}),f(w_{n-2})) \\
         &\geq 0.
\end{align*}

 Similarly, we show that $\psi_n(K)\leq 0$:
 \begin{align*}
     \psi_n(K) &= f(w_n) - h_\delta(f(w_{n}),f(w_{n-1})) + h_\delta(f(K),f(w_n)) - K\\
         &\leq f(w_n) + h_\delta(f(K),f(w_{n})) - K\\
         &\leq f(w_n) +\frac{K-f(w_n)}{2} - K\\
         &= \frac{f(w_n)-K}{2}\\
         &\leq 0.
 \end{align*}

 If $\psi_n(w_n)= 0$, we set $w_{n+1}=w_n$ since it satisfies Eq.~\eqref{e:recurrence:general}. Similarly, if $\psi_n(K) = 0$, we set $w_{n+1}=K$. Otherwise, we get $\psi_n(w_n) \geq 0 \geq \psi_n(K)$  and we find $w_{n+1}$ by defining
 \begin{equation}\label{e:proof:Mn}
 M:= \max \{ m\in[w_n,K)_\mathbb{Z}: \psi_n(m)>0 \}.
 \end{equation}
 We then get
 \begin{align*}
 \psi_{n}(M+1) &= f(w_n) - h_\delta(f(w_{n}),f(w_{n-1})) + h_\delta(f(M+1),f(w_n)) - M-1\\
 &\geq  f(w_n) - h_\delta(f(w_{n}),f(w_{n-1})) + h_\delta(f(M),f(w_n)) - M-1\\
 & = \psi_{n}(M)-1\\
 &\geq 0,
 \end{align*}
 which -- combined with Eq.~\eqref{e:proof:Mn} -- implies that $\psi_n(M+1)=0$ and we can set $w_{n+1}=M+1$ since it satisfies Eq.~\eqref{e:recurrence:general}.

\subsubsection*{Correctness of the construction.} Let us show that the above construction is finite and satisfies \eqref{e:wave:core}. We demonstrate that if $w_{n-1}=w_{n}<K$ for some $n\in \N$, then  $w_{n+1}>w_n$ and $w_{n+1}\leq K$.

If $w_{n-1}=w_{n}<K$, then we obtain using Eq.~\eqref{e:psin}
\begin{align*}
    \psi_n(w_n) &= f(w_n) - h_\delta(f(w_{n}),f(w_{n-1})) + h_\delta(f(w_{n}),f(w_n)) - w_n\\
        &= f(w_n) - 0 + 0 - w_n\\
        &=f(w_n) - w_n > 0.
\end{align*}
Thus, $w_{n+1}>w_n$ holds. 

Let us prove that $w_{n+1}\leq K$ for all $w_n,w_{n-1}$. We can use Eq.~\eqref{e:psin} to estimate
\begin{align*}
    \psi_n(w_n+1) &= f(w_n) - h_\delta(f(w_{n}),f(w_{n-1})) + h_\delta(f(w_{n}+1),f(w_n)) - w_n - 1 \\
        &= f(w_n) - 0 + 0 - w_n - 1\\
        &=f(w_n) - w_n - 1 \geq 0.
\end{align*}
Consequently, $w_{n+1}\in [w_n+1,K]_\Z$ holds.

 To conclude the proof, we observe that Lem.~\ref{l:necessary:w1} imply that there is no wave profile \eqref{e:wave}--\eqref{e:wave:core} if inequalities \eqref{e:p:value}--\eqref{e:p:function} are violated for all $p\in\left(a,\frac{K}{2}\right]_\mathbb{Z}$.
\end{proof}

Analogously we immediately obtain the same statement for the right traveling wave.

\begin{theorem}[Necessary and sufficient condition right traveling waves]\label{t:ns:right}
Let $\delta, a, K \in \N$ be such that $2\leq a \leq K-2$ and the bistable reaction function $f$ be defined by \eqref{e:bistable}. There exists a right traveling wave \eqref{e:wave}--\eqref{e:wave:core} of bistable RDCA~\eqref{e:RDCA:DDS} with $c=1$ if and only if $a>\frac{K}{2}$ and there exists $p\in\left[\frac{K}{2},a\right)_\mathbb{Z}$ satisfying
\begin{equation}
    \delta\geq K-p,  \label{e:right:p:value}  
\end{equation}
and 
\begin{equation}
   f(p)\leq 2p-K. \label{e:right:p:function}
\end{equation}
\end{theorem}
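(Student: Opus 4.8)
The plan is to deduce Theorem~\ref{t:ns:right} from Theorem~\ref{t:ns:left} by exploiting the reflection symmetry already isolated in Lemma~\ref{l:right:traveling}, rather than repeating the construction. Recall that lemma associates to the reaction $f$ the reflected reaction $\widehat{f}(u)=K-f(K-u)$, which is again a bistable reaction function with viability constant $\widehat{a}=K-a$; moreover this reflection is an involution, $\widehat{\widehat{f}}=f$, and the reflected profile $\widehat{w}_n=K-w_{N-n}$ again satisfies the monotonicity and boundary normalisations~\eqref{e:wave}--\eqref{e:wave:core}. Applying Lemma~\ref{l:right:traveling} to the RDCA with reaction $\widehat{f}$ (and using $\widehat{\widehat{f}}=f$) yields: the bistable RDCA~\eqref{e:RDCA:DDS} with reaction $f$ admits a right traveling wave with $c=1$ if and only if the RDCA with reaction $\widehat{f}$ and viability constant $\widehat{a}=K-a$ admits a left traveling wave with $c=-1$.

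Next I would invoke Theorem~\ref{t:ns:left} for the RDCA with reaction $\widehat{f}$ and parameters $(\widehat{a},K,\delta)=(K-a,K,\delta)$: a left traveling wave with $c=-1$ exists if and only if $\widehat{a}<\frac{K}{2}$ and there is $q\in\left(\widehat{a},\frac{K}{2}\right]_{\Z}$ with $\delta\ge q$ and $\widehat{f}(q)\ge 2q$. The inequality $\widehat{a}<\frac{K}{2}$ reads $K-a<\frac{K}{2}$, i.e.\ $a>\frac{K}{2}$, which is exactly the first condition of Theorem~\ref{t:ns:right}.

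Finally I would perform the change of variable $p:=K-q$. Then $q\in\left(K-a,\frac{K}{2}\right]_{\Z}$ translates to $p\in\left[\frac{K}{2},a\right)_{\Z}$; the bound $\delta\ge q$ becomes $\delta\ge K-p$, which is~\eqref{e:right:p:value}; and $\widehat{f}(q)\ge 2q$ becomes $K-f(K-q)\ge 2q$, i.e.\ $K-f(p)\ge 2(K-p)$, which rearranges to $f(p)\le 2p-K$, i.e.~\eqref{e:right:p:function}. Collecting these equivalences gives the statement.

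The argument is essentially pure bookkeeping; the only point requiring a little care is making sure the reflection is genuinely an involution on bistable reaction functions, so that Lemma~\ref{l:right:traveling} — which produces a left wave for $f$ out of a right wave for $\widehat{f}$ — can be run ``in reverse'' to produce a right wave for $f$ out of a left wave for $\widehat{f}$, and that the reflected profile still meets~\eqref{e:wave}--\eqref{e:wave:core}. Both facts are already contained in (the proof of) Lemma~\ref{l:right:traveling}, so no genuinely new estimate is needed.
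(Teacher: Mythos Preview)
Your argument is correct and matches the paper's approach, which simply states that Theorem~\ref{t:ns:right} follows ``analogously'' from Theorem~\ref{t:ns:left}; you have made that analogy precise by routing it through Lemma~\ref{l:right:traveling} and the involution $\widehat{\widehat{f}}=f$, and your translation of the conditions via $p=K-q$ is accurate.
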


Despite the fact that we get full characterization of left and right traveling waves in Thms.~\ref{t:ns:left}--\ref{t:ns:right} as well as a constructive proof, the detailed inspection of the proof shows that the construction is not necessarily unique (there could be more solutions of equations $\psi_n(m)=0$). The following examples illustrate the fact that numerous traveling fronts naturally coexist in this setting.

\begin{example}[Multiplicity of traveling waves]\label{ex:nonuniqueness}
Let us consider $a=3$, $K=30$ and the bistable nonlinearity \eqref{e:bistable} defined by the following table (and depicted in Fig.~\ref{fig:ex_nonuniqueness_nonexistence_reaction})

\begin{figure}
    \centering
    \includegraphics[width=.45\textwidth]{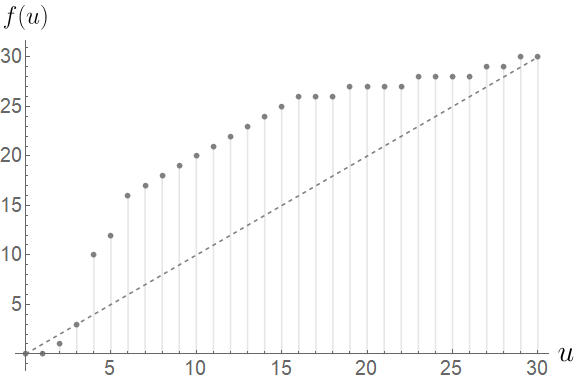}
    \includegraphics[width=.45\textwidth]{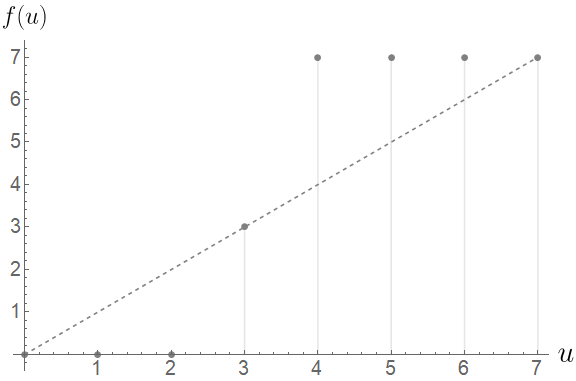}
    \caption{The bistable reaction function from Example~\ref{ex:nonuniqueness} (left panel) and reaction function~\eqref{e:bistable:x:nonexistence} from Example~\ref{x:nonexistence} for which there are neither traveling nor pinned traveling profiles for certain diffusion parameters (right panel).}
    \label{fig:ex_nonuniqueness_nonexistence_reaction}
\end{figure}

{
\[
\begin{smallmatrix}
 m & & 0 & 1 & 2 & 3 & 4 & 5 & 6 & 7 & 8 & 9 & 10 & 11 & 12 & 13 & 14 & 15 & 16 & 17 & 18 & 19 & 20 & 21 & 22 & 23 & 24 & 25 & 26 & 27 & 28 & 29 & 30 \\
 f(m) & & 0 & 0 & 1 & 3 & 10 & 12 & 16 & 17 & 18 & 19 & 20 & 21 & 22 & 23 & 24 & 25 & 26 & 26 & 26 & 27 & 27 & 27 & 27 & 28 & 28 & 28 & 28 & 29 & 29 & 30 & 30 \\
\end{smallmatrix}
\]
}
Apparently, the assumptions of Thm.~\ref{t:ns:left} are satisfied, e.g., for $p=4$ and $\delta\geq p=4>a=3$, since $f(4)=10>2\cdot4=8$. Let us consider, e.g., $\delta=5$.

In order to find $w_1$, we make use of the function $\psi_0$ given by Eq.~\eqref{e:psi0} and we get
\[
\begin{smallmatrix}
 m & & 4 & 5 & 6 & 7 & 8 & 9 & 10 & 11 & 12 & 13 & 14 & 15 & 16 \
& 17 & 18 & 19 & 20 & 21 & 22 & 23 & 24 & 25 & 26 & 27 & 28 & 29 & 30 \
\\
 \psi_0(m) & & 1 & 0 & -1 & -2 & -3 & -4 & -5 & -6 & -7 & -8 & -9 & \
-10 & -11 & -12 & -13 & -14 & -15 & -16 & -17 & -18 & -19 & -20 & -21 \
& -22 & -23 & -24 & -25 \\
\end{smallmatrix}
\]
Therefore, we set $w_1=5$. Using Eq.~\eqref{e:psin}, we compute values of $\psi_1$
\[
\begin{smallmatrix}
 m & & 5 & 6 & 7 & 8 & 9 & 10 & 11 & 12 & 13 & 14 & 15 & 16 & 17 & 18 & 19 & \
20 & 21 & 22 & 23 & 24 & 25 & 26 & 27 & 28 & 29 & 30 \\
 \psi_1(m) & & 2 & 3 & 2 & 2 & 1 & 1 & 0 & 0 & -1 & -2 & -3 & -4 & -5 & -6 & -7 & \
-8 & -9 & -10 & -11 & -12 & -13 & -14 & -15 & -16 & -17 & -18 \\
\end{smallmatrix}
\]
We set $w_2=11$ but observe at the same time that we could choose $w_2=12$. We also remark that $\psi_n$ is not decreasing in general (cf. with values $\psi_1(5), \psi_1(6), \psi_1(7)$). Continuing in this manner we can arrive to four left traveling waves (depicted in~Fig.~\ref{fig:ex_nonuniqueness_waveprofiles})

\begin{figure}
    \centering
    \includegraphics[width=.4\textwidth]{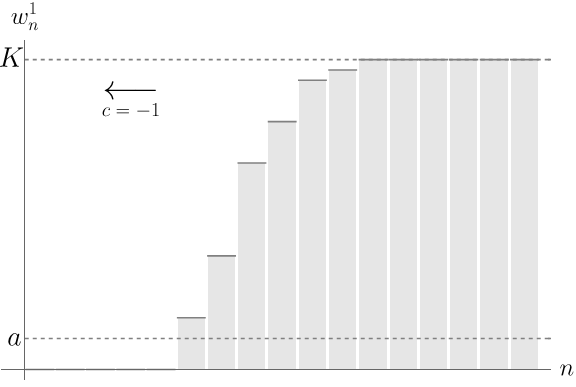}
    \includegraphics[width=.4\textwidth]{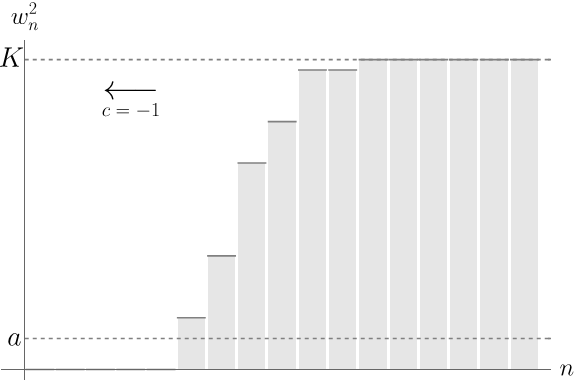}
    \includegraphics[width=.4\textwidth]{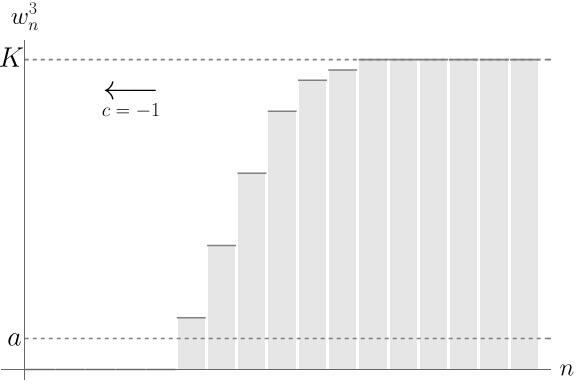}
    \includegraphics[width=.4\textwidth]{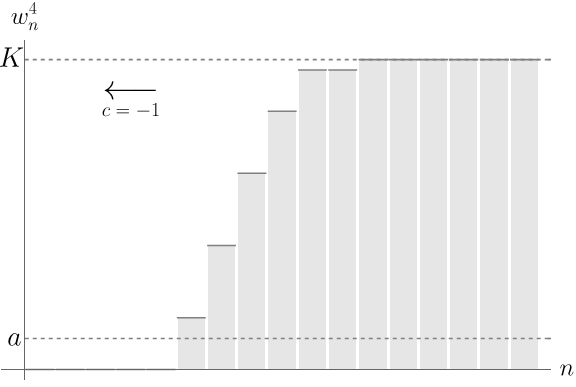}
    \caption{Illustration of nonuniqueness of traveling waves. Four different profiles from Example~\ref{ex:nonuniqueness}.}
    \label{fig:ex_nonuniqueness_waveprofiles}
\end{figure}

\begin{align*}
    w^1 &= (\ldots, 0, 0, 0, 5, 11, 20, 24, 28, 29, 30, 30, 30, \ldots),\\
    w^2 &= (\ldots, 0, 0, 0, 5, 11, 20, 24, 29, 29, 30, 30, 30, \ldots),\\
    w^3 &= (\ldots, 0, 0, 0, 5, 12, 19, 25, 28, 29, 30, 30, 30, \ldots),\\
    w^4 &= (\ldots, 0, 0, 0, 5, 12, 19, 25, 29, 29, 30, 30, 30, \ldots).
\end{align*}
\end{example}


\section{Pinned waves}\label{s:pinned}
Thms.~\ref{t:ns:right}--\ref{t:ns:left} immediately imply necessary and sufficient conditions for the nonexistence of left or right traveling waves. There is a naturally intriguing question, whether we can find a pinned wave (with $c=0$) whenever their assumptions are violated.

Surprisingly, this answer, which is positive in the case of lattice \cite{Logan1994} or partial \cite{Chow1998} differential equations, can be negative in the setting of cellular automata. We provide a simple example to illustrate this fact before we discuss the case in which neither traveling nor pinned fronts exist in general.

\begin{example}\label{x:nonexistence}
Let us consider a bistable nonlinearity \eqref{e:bistable} with $a=3$, $K=7$ given by (see Fig.~\ref{fig:ex_nonuniqueness_nonexistence_reaction})


\begin{equation}\label{e:bistable:x:nonexistence}
f(u)=\begin{cases}
    0 &\text{if } u\in [0,3)_\Z, \\
    3 & \text{if } u=3, \\
    7 &\text{if } u\in (3,7]_\Z. \\    
    \end{cases}
\end{equation}
Thms.~\ref{t:ns:left} and~\ref{t:ns:right} imply there is no traveling wave for $\delta=3$. Let us show that there is no pinned wave \eqref{e:wave}--\eqref{e:wave:core} with $c=0$ either.

Since $w_{-1}=w_0=0$ we have from \eqref{e:recurrence:general}
\[
w_0=0 = f(w_0) + h_3(f(w_{-1}),f(w_0)) + h_3(f(w_{1}),f(w_0)) = 0 + 0 + h_3(f(w_1),0).
\]
Consequently, we have either $w_1=1$ or $w_1=2$. If $w_1=1$ we have from \eqref{e:recurrence:general}
\[
w_1= 1 = f(w_1) + h_3(f(w_{0}),f(w_1)) + h_3(f(w_{2}),f(w_1)) = 0 + 0 + h_3(f(w_2),0),
\]
which is only possible if $w_2=3$. However, since $\delta=3$ the equation
\[
w_2= 3 = f(w_2) + h_3(f(w_{1}),f(w_2)) + h_3(f(w_{3}),f(w_2)) = 3 - 1 + h_3(f(w_3),3)
\]
cannot be satisfied for any $w_3$. If $w_1=2$, then
\[
w_1= 2 = f(w_1) + h_3(f(w_{0}),f(w_1)) + h_3(f(w_{2}),f(w_1)) = 0 + 0 + h_3(f(w_2),0)
\]
cannot be satisfied since $h_3(f(3),0)=h_3(3,0)=1$ and $h_3(f(n),0)=h_3(7,0)=3$ for $n\in(3,7]_\Z$. Consequently, there is no pinned wave.
\end{example}

We will later see that the configuration from Example~\ref{x:nonexistence} leads to higher-order waves. However, we return first to the pinned waves waves with $c=0$. The pinned waves always exist in the case of the weakest diffusion $\delta=1$.

\begin{lemma}[Pinned waves for $\delta=1$]\label{l:pinned:trivial}
    Let $\delta, a, K \in \N$ be such that $2\leq a \leq K-2$ and the bistable reaction function $f$ be defined by \eqref{e:bistable}. If  $\delta=1$, then
    \begin{enumerate}
        \item there are no moving traveling waves of \eqref{e:RDCA:DDS},
        \item there exists a pinned traveling wave of \eqref{e:RDCA:DDS}.
    \end{enumerate}
\end{lemma}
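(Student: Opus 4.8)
The plan is to prove the two claims separately, using the fact that when $\delta=1$ the auxiliary function $h_1$ takes only the values $\pm 1$ and $0$. First I would dispatch claim (1): by Theorem~\ref{t:ns:left} a left traveling wave with $c=-1$ requires some $p\in(a,K/2]_\Z$ with $\delta\geq p$; but $p>a\geq 2>1=\delta$, so no such $p$ exists. Symmetrically, Theorem~\ref{t:ns:right} requires $p\in[K/2,a)_\Z$ with $\delta\geq K-p$; since $p<a\leq K-2$ we have $K-p>2>1=\delta$, again impossible. Lemma~\ref{l:nonexistence:fast} rules out $|c|\geq 2$. Hence no moving traveling waves exist when $\delta=1$.

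For claim (2) the plan is to exhibit an explicit pinned profile. The natural candidate is the "sharp" step profile
\[
w=(\ldots,0,0,w_0,w_1,w_2,\ldots),\qquad w_n=0 \text{ for } n\leq 0,\quad w_n=K \text{ for } n\geq 1,
\]
i.e.\ wave core length $N-1=0$ with $w_0=0$, $w_1=K$. I would verify \eqref{e:recurrence:general} with $c=0$ at every lattice site. Away from the interface (all three of $w_{n-1},w_n,w_{n+1}$ equal to $0$, or all equal to $K$) the reaction fixes $0$ and $K$ and the $h_1$ terms vanish since the arguments are equal, so the equation holds trivially. The only sites to check are $n=0$ (neighbours $0,0,K$, so $f$ gives $0,0,K$) and $n=1$ (neighbours $0,K,K$, so $f$ gives $0,K,K$). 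At $n=0$ we need $0=f(0)+h_1(f(w_{-1}),f(w_0))+h_1(f(w_1),f(w_0))=0+h_1(0,0)+h_1(K,0)=0+0+h_1(K,0)$. Here the subtlety appears: $h_1(K,0)=1$ unless $K\in\{0,1\}$, which contradicts $K\geq a+2\geq 4$. So the sharp profile fails, and the construction must be adjusted.

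The fix is to thicken the interface by one cell so that the jump at each interface site is absorbed by a single $h_1=\pm1$ on one side. I would instead take the profile with $w_0=0$, $w_1=1$, $w_2=K-1$, $w_3=K$ (and the obvious tails), so the core has entries $w_1=1,w_2=K-1$. Then at $n=0$: $f(w_{-1})=f(w_0)=0$, $f(w_1)=f(1)=0$ (as $1\in(0,a)_\Z$, hence $f(1)\in[0,1)_\Z=\{0\}$), giving $h_1(0,0)+h_1(0,0)=0=w_0$. At $n=1$: $f(w_0)=0$, $f(w_1)=0$, $f(w_2)=f(K-1)=K$, so the RHS is $0+h_1(0,0)+h_1(K,0)=1=w_1$. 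At $n=2$ symmetrically $f(w_1)=0,f(w_2)=K,f(w_3)=K$ gives $K+h_1(0,K)+h_1(K,K)=K-1=w_2$. At $n=3$: $f(w_2)=f(w_3)=K$, $f(w_4)=K$, giving $K+0+0=K=w_3$. All remaining sites are in the constant tails and hold trivially. The main obstacle is precisely this: the naive sharp front does not work, and one must notice that with $\delta=1$ a transition layer of width exactly one is needed and that bistability forces $f(1)=0$ and $f(K-1)=K$, which is what makes the one-cell layer consistent. One should also double-check the monotonicity requirement \eqref{e:wave:core}, i.e.\ $0<1\leq K-1<K$, which holds since $K\geq 4$.
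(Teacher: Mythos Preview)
Your proposal is correct and matches the paper's proof: part (1) is derived from Theorems~\ref{t:ns:left}--\ref{t:ns:right} (you add the useful explicit citation of Lemma~\ref{l:nonexistence:fast} for $|c|\geq 2$), and for part (2) you exhibit exactly the same pinned profile $w=(\dots,0,0,1,K-1,K,K,\dots)$ that the paper uses. The paper simply states this profile without the verification or the preliminary false start with the sharp step, but your argument and the paper's are substantively identical.
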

\begin{proof}
    The former statement follows immediately from Thms.~\ref{t:ns:left}--\ref{t:ns:right}. We obtain the latter one by observing that 
    \[
        w=(\dots,0,0,1,K-1,K,K,\dots)
    \]
    is a pinned wave for $\delta=1$. 
\end{proof}

Once $\delta>1$, the situation becomes more intricate and we can take advantage of the following necessary and sufficient condition.

\begin{lemma}[Necessary and sufficient condition for pinned waves]\label{l:pinned:ns}
Let $\delta, a, K \in \N$ be such that $2\leq a \leq K-2$ and the bistable reaction function $f$ be defined by \eqref{e:bistable}. A wave profile $w$ \eqref{e:wave}--\eqref{e:wave:core} is pinned if and only if 
\begin{equation}\label{e:pinned:sum}
    h_\delta(f(w_n),f(w_{n-1}))= \sum_{i=1}^{n-1} \left(w_i-f(w_i)\right)
\end{equation}
for all $n \in [1,+\infty)_\Z$.
\end{lemma}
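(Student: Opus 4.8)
The plan is to start from the traveling wave recurrence \eqref{e:recurrence:general} specialized to the pinned case $c=0$, which reads $w_n = f(w_n) + h_\delta(f(w_{n-1}),f(w_n)) + h_\delta(f(w_{n+1}),f(w_n))$ for all $n\in\Z$, and to show it is equivalent to the telescoped identity \eqref{e:pinned:sum}. First I would rearrange the $c=0$ recurrence into the form
\[
h_\delta(f(w_{n+1}),f(w_n)) = \bigl(w_n - f(w_n)\bigr) - h_\delta(f(w_{n-1}),f(w_n)),
\]
and then exploit the antisymmetry of $h_\delta$, namely $h_\delta(p,q) = -h_\delta(q,p)$, which is immediate from the definition \eqref{e:h_delta} (the cases $p-q\in\{2j-2,2j-1\}$ and $q-p\in\{2j-2,2j-1\}$ are interchanged, and the "otherwise" case is symmetric). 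This turns the recurrence into
\[
h_\delta(f(w_{n+1}),f(w_n)) + h_\delta(f(w_{n}),f(w_{n-1})) = w_n - f(w_n),
\]
i.e. the quantity $g_n := h_\delta(f(w_{n+1}),f(w_n))$ satisfies the first-order linear recurrence $g_n + g_{n-1}\cdot(\text{sign adjustment})$ — more precisely, writing $b_n := h_\delta(f(w_n),f(w_{n-1}))$, the relation becomes $b_{n+1} - b_n = \dots$ no: it becomes $b_{n+1} + b_n = w_n - f(w_n)$, wait — I should be careful here; the cleaner route is to observe $b_{n+1} = (w_n - f(w_n)) - b_n$ is not a telescope, so instead I would sum the rearranged identity $h_\delta(f(w_n),f(w_{n-1})) = (w_{n-1}-f(w_{n-1})) - h_\delta(f(w_{n-1}),f(w_{n-2}))$ and induct.

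Concretely, the forward direction: assume $w$ is pinned. Use the boundary behavior from \eqref{e:wave:core}: for $n\le 0$ we have $w_n = 0$, $f(w_n)=0$, so $h_\delta(f(w_0),f(w_{-1})) = h_\delta(0,0) = 0$, giving the base case of \eqref{e:pinned:sum} at $n=1$ (where the right side is the empty sum $=0$ and the left side is $h_\delta(f(w_1),f(w_0)) = h_\delta(f(w_1),0)$ — and indeed the $c=0$ recurrence at $n=0$ says $0 = 0 + 0 + h_\delta(f(w_1),0)$, so both sides vanish). Then I would prove \eqref{e:pinned:sum} by induction on $n$: assuming it holds at $n$, apply the $c=0$ recurrence at index $n$ in the rearranged antisymmetric form $h_\delta(f(w_{n+1}),f(w_n)) = (w_n - f(w_n)) + h_\delta(f(w_{n-1}),f(w_n)) = (w_n - f(w_n)) - h_\delta(f(w_n),f(w_{n-1}))$, and substitute the inductive hypothesis for $h_\delta(f(w_n),f(w_{n-1}))$ to obtain \eqref{e:pinned:sum} at $n+1$. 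The reverse direction is essentially the same computation run backwards: given \eqref{e:pinned:sum} for all $n\ge 1$, subtract the instance at $n$ from the instance at $n+1$ to recover $h_\delta(f(w_{n+1}),f(w_n)) + h_\delta(f(w_n),f(w_{n-1})) = w_n - f(w_n)$, rewrite using antisymmetry as the $c=0$ recurrence at index $n$ for $n\ge 1$, and handle indices $n\le 0$ directly from the boundary values $w_n\in\{0\}$ (and symmetrically $w_n = K$ for large $n$, where $f(K)=K$ and the $h_\delta$ terms vanish by \eqref{e:wave:core} since consecutive $w$'s are equal to $K$), so that \eqref{e:wave:F} holds for all $n\in\Z$.

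I do not expect a serious obstacle here; the only points requiring care are (i) recording the antisymmetry $h_\delta(p,q)=-h_\delta(q,p)$ explicitly as it is the engine of the telescoping, (ii) getting the empty-sum base case at $n=1$ to match the boundary equation, and (iii) checking that \eqref{e:pinned:sum} for all $n\ge 1$ together with the profile constraints \eqref{e:wave:core} really does force \eqref{e:wave:F} at \emph{every} integer index, not just $n\ge 1$ — this uses that $w$ is eventually constant at both ends so the recurrence is trivially satisfied outside a finite window. The bulk of the argument is the one-line induction step, so I would keep the write-up short.
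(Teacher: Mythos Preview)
Your approach is correct and is essentially the paper's argument: both start from the $c=0$ recurrence, use the boundary value $w_0=0$ to get the base case $h_\delta(f(w_1),f(w_0))=0$, and then telescope/induct, with antisymmetry of $h_\delta$ doing the work (the paper uses it silently, you make it explicit). Watch the signs in your inductive step: the rearranged recurrence reads $h_\delta(f(w_{n+1}),f(w_n)) = (w_n-f(w_n)) - h_\delta(f(w_{n-1}),f(w_n)) = (w_n-f(w_n)) + h_\delta(f(w_n),f(w_{n-1}))$, so the sequence $b_n=h_\delta(f(w_n),f(w_{n-1}))$ satisfies $b_{n+1}-b_n = w_n-f(w_n)$ and telescopes directly---your display has the signs reversed, which would block the induction as written.
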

\begin{proof}
Let us demonstrate the necessity first. Using Eq.~\eqref{e:recurrence:general}, we obtain
\begin{align*}
    w_0&=f(w_0)+h_\delta(f(w_{-1}),f(w_{0}))+h_\delta(f(w_1),f(w_{0}))\\
    0 &= 0 + 0 + h_\delta(f(w_1),f(w_{0}))= h_\delta(f(w_1),f(w_{0})).    
\end{align*}
Consequently,
\begin{align*}
    w_1&=f(w_1)+h_\delta(f(w_0),f(w_{1}))+h_\delta(f(w_2),f(w_{1}))= f(w_1) + 0 + h_\delta(f(w_2),f(w_{1}))\\    
    h_\delta(f(w_2),f(w_{1})) &= w_1-f(w_1)
\end{align*}
and 
\begin{align*}
    w_2&=f(w_2)+h_\delta(f(w_1),f(w_{2}))+h_\delta(f(w_3),f(w_{2}))\\
    w_2 &= f(w_2) - (w_1-f(w_1)) + h_\delta(f(w_3),f(w_{2}))\\
    h_\delta(f(w_3),f(w_{2})) &= w_2-f(w_2)+w_1-f(w_1)=\sum_{i=1}^{2} \left( w_i-f(w_i)\right).
\end{align*}
Applying similar arguments for $w_n$ for all $n \in \N$, we obtain Eq.~\eqref{e:pinned:sum}.

To prove the sufficiency, it is enough to realize that if for all $n \in \N$ and $h_\delta(f(w_n),f(w_{n-1}))$ is given by Eq.~\eqref{e:pinned:sum}, then Eq.~\eqref{e:recurrence:general} with $c=0$ is satisfied for all $n \in \N$.
\end{proof}

Lemma~\ref{l:pinned:ns} immediately restricts the values $w_1$ and $w_{N-1}$ of the wave core for pinned waves.

\begin{corollary}[Core boundaries]\label{c:core:boundaries}
Let $\delta, a, K \in \N$ be such that $2\leq a \leq K-2$ and the bistable reaction function $f$ be defined by \eqref{e:bistable}. If there exists a pinned wave \eqref{e:wave}--\eqref{e:wave:core} with $c=0$, then
\begin{equation}\label{e:pinned:NC1}
    f(w_1) \leq 1 \leq w_1 <a <w_{N-1} \leq K-1 \leq f(w_{N-1}).
\end{equation}
\end{corollary}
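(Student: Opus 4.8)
The statement follows directly from the necessary-and-sufficient characterization in Lemma~\ref{l:pinned:ns} applied at the two ends of the wave core, combined with the monotonicity of the bistable reaction $f$ (Eq.~\eqref{e:bistable}) and elementary properties of $h_\delta$ (Eq.~\eqref{e:h_delta}). The plan is to establish the left half of the chain \eqref{e:pinned:NC1} by looking at $n=1$ and $n=2$ in \eqref{e:pinned:sum}, and then obtain the right half by a symmetry argument via Lemma~\ref{l:right:traveling} (reflection $\widehat{w}_n = K - w_{N-n}$, which maps pinned waves to pinned waves since $\widehat{c} = -c = 0$), so that I only need to prove one side carefully.

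First I would extract from Lemma~\ref{l:pinned:ns} with $n=1$ the identity $h_\delta(f(w_1),f(w_0)) = h_\delta(f(w_1),0) = 0$ (empty sum). By the definition of $h_\delta$ in \eqref{e:h_delta}, $h_\delta(f(w_1),0)=0$ forces $0 \le f(w_1) \le 1$, i.e. $f(w_1)\le 1$; this is the first inequality. Next, $w_1\ge 1$ is immediate since $w_1>w_0=0$ and $w_1\in\Z$. For $w_1<a$: suppose instead $w_1\ge a$; then by \eqref{e:bistable} (more precisely $f(a)=a$ and $f(u)>u$ on $(a,K)_\Z$, using monotonicity of $f$) we would have $f(w_1)\ge a \ge 2$, contradicting $f(w_1)\le 1$. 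Combined with the standing assumption $a\le K-2$ and the requirement that the wave core is nontrivial (so $1 \le w_1 \le w_{N-1} \le K-1$ with $w_{N-1}>a$ to be shown), this yields $w_1 < a < w_{N-1} \le K-1$; note $w_{N-1}\le K-1$ holds because $w_{N-1}<w_N=K$ and integrality, and $w_1 < a$ together with monotonicity forces $N-1 \ge 1$ so $w_1 \le w_{N-1}$.

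For the right end, I would apply Lemma~\ref{l:right:traveling} in reverse: if $w$ is pinned then $\widehat{w}_n = K - w_{N-n}$ is a pinned wave for the RDCA with reaction $\widehat{f}(u)=K-f(K-u)$ and viability constant $\widehat{a}=K-a$. Its first core value is $\widehat{w}_1 = K - w_{N-1}$, and the left-end bounds just proved give $\widehat{f}(\widehat{w}_1)\le 1$, $1\le \widehat{w}_1 < \widehat{a}$. Translating back through $\widehat{f}(\widehat{w}_1) = K - f(w_{N-1})$ and $\widehat{w}_1 = K - w_{N-1}$, these read exactly $f(w_{N-1}) \ge K-1$, $w_{N-1} \le K-1$, and $w_{N-1} > a$, which is the right half of \eqref{e:pinned:NC1}. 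Stitching the two halves with the trivial chain $w_1 < a < w_{N-1}$ completes the proof.

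The only mild subtlety — the main obstacle, such as it is — is making sure the reflection lemma genuinely transfers the pinned property: Lemma~\ref{l:right:traveling} is stated for $c=-1 \leftrightarrow \widehat c = 1$, so I should either remark that its proof is speed-independent and applies verbatim with $c=0$, or just redo the one-line computation for $\widehat{w}$ at $c=0$ directly. Everything else is bookkeeping with the piecewise definition of $h_\delta$ and the monotone bistable shape of $f$.
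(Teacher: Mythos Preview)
Your proof is correct. The left half of the chain matches the paper's argument exactly: apply \eqref{e:pinned:sum} at $n=1$ to get $h_\delta(f(w_1),0)=0$, read off $f(w_1)\le 1$ from \eqref{e:h_delta}, and then use the bistable shape of $f$ to force $w_1<a$ (the paper just says the inner inequalities follow from \eqref{e:bistable}, whereas you spell out the contradiction if $w_1\ge a$). For the right half, however, the paper does \emph{not} go through the reflection symmetry of Lemma~\ref{l:right:traveling}; it simply evaluates the recurrence~\eqref{e:recurrence:general} directly at $n=N$, where $w_{N-1}<w_N=K=w_{N+1}$, obtaining $h_\delta(f(w_{N-1}),K)=0$ and hence $f(w_{N-1})\ge K-1$. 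Your symmetry route is perfectly valid but slightly less economical, since you must remark that the proof of Lemma~\ref{l:right:traveling} is speed-independent and extends to $c=0$; the paper's direct computation at the right endpoint avoids that detour. (Incidentally, the case $n=2$ you mention in your plan is never actually used.)
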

\begin{proof}
The first inequality in~\eqref{e:pinned:NC1} follows immediately from Eq.~\eqref{e:pinned:sum}, which yields
\[h_\delta(f(w_{1}),f(w_{0}))=0.\]
The inequality $K-1\leq f(w_{N-1})$ can be obtained in a similar fashion
\begin{align*}
    K=w_N &= f(w_N) + h_\delta(f(w_{N-1}),f(w_N))+h_\delta(f(w_{N+1}),f(w_N))=K+h_\delta(f(w_{N-1}),f(w_N))+0,
\end{align*}
which implies $h_\delta(f(w_{N-1}),f(w_N))=0$ and $K-1 \leq f(w_{N-1})$.

To complete the proof, we observe that the inner inequalities in Eq.~\eqref{e:pinned:NC1} follow from the definition of $f$ in \eqref{e:bistable}.
\end{proof}

Moreover, we can show that pinned waves have strictly increasing cores.

\begin{lemma}[Strictly increasing cores of pinned waves]\label{l:pinned:unequal}
Let $\delta, a, K \in \N$ be such that $2\leq a \leq K-2$ and the bistable reaction function $f$ be defined by \eqref{e:bistable}. If there exists a pinned wave \eqref{e:wave}--\eqref{e:wave:core} with $c=0$, then
\begin{equation}\label{e:pinned:unequal}
    w_{n-1} < w_{n}, \quad \text{ for all } n\in(0,N]_\Z.
\end{equation}
\end{lemma}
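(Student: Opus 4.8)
The plan is to argue by contradiction. Suppose there is a smallest index $n\in(0,N]_\Z$ with $w_{n-1}=w_n$. By Corollary~\ref{c:core:boundaries} we know $w_1\geq 1 > w_0=0$, so $n\geq 2$; moreover $w_{n-1}=w_n < K$ because $w_{N-1}<K$ and $w_{N-1}\leq w_n$ would force $n=N$ but then $w_{n-1}=w_n=K$ contradicts $w_{N-1}<K$. So we are genuinely inside the strictly-increasing-expected part of the core with $w_{n-2}<w_{n-1}=w_n<K$.

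The key tool is the characterization in Lemma~\ref{l:pinned:ns}: the pinned wave satisfies
\[
h_\delta(f(w_{k}),f(w_{k-1}))=\sum_{i=1}^{k-1}\bigl(w_i-f(w_i)\bigr)
\]
for every $k\geq 1$. I would write this identity for $k=n$ and for $k=n+1$ and subtract. Since $w_{n-1}=w_n$, the left-hand side for $k=n$ is $h_\delta(f(w_n),f(w_{n-1}))=h_\delta(f(w_n),f(w_n))=0$ by the definition of $h_\delta$ in~\eqref{e:h_delta} (the argument difference is $0$, which falls in the ``otherwise'' case). Hence $\sum_{i=1}^{n-1}(w_i-f(w_i))=0$. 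Then the identity for $k=n+1$ gives
\[
h_\delta(f(w_{n+1}),f(w_n))=\sum_{i=1}^{n}\bigl(w_i-f(w_i)\bigr)=\bigl(w_n-f(w_n)\bigr)+\sum_{i=1}^{n-1}\bigl(w_i-f(w_i)\bigr)=w_n-f(w_n).
\]
Now I invoke Corollary~\ref{c:core:boundaries}: since $0<w_n<K$ is a core value we actually need to know the sign of $w_n-f(w_n)$. Because $w_{n-1}=w_n$ and, from the $k=n$ step, the partial sum $\sum_{i=1}^{n-1}(w_i-f(w_i))$ vanishes, while each term with $w_i<a$ is $\geq 0$ (indeed $w_i-f(w_i)>0$ strictly when $f(w_i)<w_i$, which holds for $w_i\in(0,a)_\Z$ unless $f(w_i)=w_i$), one forces $w_i=f(w_i)$ for all $i\le n-1$; in particular $w_{n-1}=f(w_{n-1})$, which by bistability~\eqref{e:bistable} is impossible for $w_{n-1}\in(0,a)_\Z\cup(a,K)_\Z$ and hence forces $w_{n-1}=a$. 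But then $w_n=w_{n-1}=a$, and using~\eqref{e:recurrence:general} with $c=0$ at index $n$ together with $f(a)=a$ and $h_\delta(f(w_{n-1}),f(w_n))=0$ yields $w_n=a=a+h_\delta(f(w_{n+1}),a)$, so $h_\delta(f(w_{n+1}),a)=0$, forcing $|f(w_{n+1})-a|\le 1$; since the core is nondecreasing $w_{n+1}\geq a$ and $f(w_{n+1})\geq f(a)=a$, and $f(w_{n+1})\leq a+1$ contradicts $f(w_{n+1})\geq K-1\geq a+1$ only if $w_{n+1}$ is near the top—so I instead push the partial-sum bookkeeping one more step to derive a contradiction with $K-1\leq f(w_{N-1})$ from Corollary~\ref{c:core:boundaries}.

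The main obstacle is the bookkeeping at the two extremes: making the sign argument on $\sum(w_i-f(w_i))$ airtight requires care because the core straddles $a$, where $w_i-f(w_i)$ changes sign ($\geq 0$ below $a$, $\leq 0$ above $a$, zero at $a$), so a flat spot $w_{n-1}=w_n$ on the wrong side of $a$ cannot be excluded purely by positivity. I expect the clean route is to handle the flat spot below $a$, at $a$, and above $a$ separately: below $a$ the vanishing partial sum forces all earlier terms to vanish and hence $f(w_{n-1})=w_{n-1}$, impossible by bistability unless $w_{n-1}=a$, contradicting $w_{n-1}<a$; above $a$ the symmetric telescoped identity (run from the top using $k=N$ downward, or equivalently apply Lemma~\ref{l:right:traveling}/the reflection $\widehat w_n=K-w_{N-n}$ to reduce to the below-$a$ case); and $w_{n-1}=w_n=a$ is ruled out by propagating the equality rightward via~\eqref{e:recurrence:general} until it collides with $w_N=K$. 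So the proof is essentially a contradiction argument resting on Lemma~\ref{l:pinned:ns} plus the monotone-sign structure of $u\mapsto u-f(u)$ dictated by~\eqref{e:bistable}, with the reflection symmetry of Lemma~\ref{l:right:traveling} used to avoid repeating the argument at the upper end.
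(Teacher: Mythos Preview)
Your three-case sketch in the final paragraph (flat spot below $a$, at $a$, above $a$) can be made into a correct proof, but the reasoning in your second paragraph contains a genuine slip: from $\sum_{i=1}^{n-1}(w_i-f(w_i))=0$ you \emph{cannot} conclude that every term vanishes and hence $w_{n-1}=f(w_{n-1})$. Once some $w_i$ lie above $a$ the summands change sign, so positive and negative contributions may cancel; this is exactly why the ``above $a$'' case needs its own treatment, and it is what sends you into the tangled analysis that follows. (Also note that Lemma~\ref{l:right:traveling} is stated only for $c=\pm 1$; the reflection you want for pinned waves is true but would need a separate one-line verification.)

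The paper's proof sidesteps the case split with a single global observation. Writing $S_n:=h_\delta(f(w_n),f(w_{n-1}))=\sum_{i=1}^{n-1}(w_i-f(w_i))$ via Lemma~\ref{l:pinned:ns}, one has $S_1=0$ and $S_N=0$ (the latter because $f(w_{N-1})\geq K-1$ from Corollary~\ref{c:core:boundaries}, so $h_\delta(K,f(w_{N-1}))=0$). Since the increments $w_i-f(w_i)$ are strictly positive for $w_i\in(0,a)_\Z$, zero at $a$, and strictly negative for $w_i\in(a,K)_\Z$, the sequence $(S_n)$ strictly increases while the core is below $a$ and strictly decreases once it is above $a$; together with $S_1=S_N=0$ this forces $S_n>0$ for all $n\in\{2,\ldots,N-1\}$. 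Any flat spot $w_{n-1}=w_n$ would give $S_n=h_\delta(f(w_n),f(w_n))=0$, an immediate contradiction. This ``tent-shape'' argument is shorter than your route and requires neither the reflection symmetry nor the rightward-propagation step at $a$.
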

\begin{proof}
Assume by contradiction that $0<w_{n-1}=w_n<K$ for some $n \in (0,N]_\Z$. Cor.~\ref{c:core:boundaries} implies that the core contains at least one value $w_i\in(0,a)_\Z$ and at least one value $w_i\in(a,K)_\Z$. Since $w_i-f(w_i)>0$ for $w_i \in (0,a)_\Z$, then Lem.~\ref{l:pinned:ns} implies that 
\[
h_\delta(f(w_{n}),f(w_{n-1}))=\sum_{i=1}^{n-1} \left( w_i-f(w_i) \right)>0
\]
is strictly increasing for $n=2,\ldots,\ell,\ell+1$ where 
\[
\ell = \max\{i: w_i<a \}.
\]
Similarly, $h_\delta(f(w_{n}),f(w_{n-1}))$ is strictly decreasing for $n=\ell+1,\ldots,N-1$ if $w_{\ell+1}>a$ or for $n=\ell+2,\ldots,N-1$ if $w_{\ell+1}=a$  since $w_i-f(w_i)<0$ for $w_i \in (a,K)_\Z$. However, Cor.~\ref{c:core:boundaries} and Lem.~\ref{l:pinned:ns} imply
\[
h_\delta(f(w_{N}),f(w_{N-1})=\sum_{i=1}^{N-1} \left( w_i-f(w_i) \right)=0.
\]
Consequently, $h_\delta(f(w_{n}),f(w_{n-1}))>0$ for all $n=2,\ldots,N-1$, a contradiction.
\end{proof}

Consequently, we can find a straightforward bound on the length of pinned waves' cores $w^\mathcal{C}$ and show that small diffusion implies steep cores.

\begin{corollary}[Core length of pinned waves]\label{c:pinned:bound}
Let $\delta, a, K \in \N$ be such that $2\leq a \leq K-2$ and the bistable reaction function $f$ be defined by \eqref{e:bistable}. If there exists a pinned wave \eqref{e:wave}--\eqref{e:wave:core} with $c=0$, then
\begin{equation}\label{e:pinned:bound}
    N \leq 2\delta + 2.
\end{equation}
\end{corollary}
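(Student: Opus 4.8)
The plan is to track the telescoping sums $P_n:=\sum_{i=1}^{n-1}(w_i-f(w_i))$ appearing on the right-hand side of the pinned-wave identity \eqref{e:pinned:sum}, and to use that the left-hand side $h_\delta(f(w_n),f(w_{n-1}))$ never exceeds $\delta$ in absolute value (by the definition \eqref{e:h_delta} of $h_\delta$), so that $|P_n|\le\delta$ for every $n$.

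First I would record the sign pattern of the increments $w_i-f(w_i)$. By Cor.~\ref{c:core:boundaries} and Lem.~\ref{l:pinned:unequal} the core of a pinned wave is strictly increasing with $w_1<a<w_{N-1}$, so the index $\ell:=\max\{i:w_i<a\}$ is well defined and $1\le\ell\le N-2$. From the bistability conditions \eqref{e:bistable}: for $i=1,\dots,\ell$ we have $0<w_i<a$ and hence $w_i-f(w_i)\ge 1$; for any index with $a<w_i<K$ we have $w_i-f(w_i)\le -1$; and $w_i=f(w_i)$ can only happen when $w_i=a$, which by strict monotonicity occurs for at most one index (necessarily $i=\ell+1$, since $w_\ell<a\le w_{\ell+1}$).

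Next I would evaluate \eqref{e:pinned:sum} at the two relevant indices. At $n=\ell+1$ it gives $P_{\ell+1}=\sum_{i=1}^{\ell}(w_i-f(w_i))\ge\ell$, and since $P_{\ell+1}=h_\delta(f(w_{\ell+1}),f(w_\ell))\le\delta$ we obtain $\ell\le\delta$. At $n=N$, combining \eqref{e:pinned:sum} with the identity $h_\delta(f(w_N),f(w_{N-1}))=0$ already established inside the proof of Cor.~\ref{c:core:boundaries} yields $P_N=0$, hence $\sum_{i=\ell+1}^{N-1}(w_i-f(w_i))=-P_{\ell+1}\ge-\delta$. On the other hand, among the indices $i=\ell+1,\dots,N-1$ every increment is $\le 0$, at most one of them equals $0$, and the remaining $r$ of them are $\le-1$, so this same sum is $\le-r$; therefore $r\le\delta$ and the number of indices in this range is $N-1-\ell\le r+1\le\delta+1$. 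Adding $\ell\le\delta$ gives $N-1=\ell+(N-1-\ell)\le 2\delta+1$, which is \eqref{e:pinned:bound}.

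I do not expect a genuine obstacle; the only place needing care is the bookkeeping around the value $w_i=a$, which contributes $0$ rather than $\pm1$ to the telescoping sum and is the source of the extra ``$+2$'' in the bound. (The degenerate case $N=1$ --- empty core --- cannot arise for a pinned wave, since \eqref{e:recurrence:general} at $n=0$ would force $h_\delta(K,0)=0$, impossible because $K\ge 4$; so the structural results quoted above apply without caveat.)
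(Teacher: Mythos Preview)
Your proposal is correct and follows essentially the same approach as the paper: both use the identity \eqref{e:pinned:sum}, the bound $|h_\delta|\le\delta$, and the sign pattern of $w_i-f(w_i)$ to conclude that at most $\delta$ core entries lie below $a$ and at most $\delta$ lie above $a$, with possibly one equal to $a$. Your write-up is slightly more explicit in invoking $P_N=0$ (via $h_\delta(f(w_N),f(w_{N-1}))=0$) to handle the second count, where the paper simply writes ``Similarly''.
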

\begin{proof}
Lem.~\ref{l:pinned:unequal} yields that the core of pinned waves is strictly increasing. Moreover, combining Eq.~\eqref{e:pinned:sum} with the definition of $h_\delta$ \eqref{e:h_delta}, we obtain
\[
\sum_{i=1}^{n-1} \left( w_i-f(w_i)\right)=h_\delta(f(w_n),f(w_{n-1})\leq \delta.
\]
Since $w_i-f(w_i)>0$ for $w_i \in (0,a)_\Z$, we have 
\[
\lvert \lbrace w_i\in\wc: 0 < w_i < a\rbrace\rvert \leq \delta.
\]
Similarly, we obtain
\[
\lvert \lbrace w_i\in\wc: a < w_i < K\rbrace\rvert \leq \delta.
\]
Since $a$ may also be in the wave core and there are $N-1$ elements in the wave core, we have
\[
N-1 \leq 2\delta +1,
\]
which implies \eqref{e:pinned:bound}.
\end{proof}

\begin{remark}
The bound~\eqref{e:pinned:bound} is optimal, since there is always a pinned wave with $\wc=(1,a,K-1)$ when $a=K/2$ and $\delta=1$.    
\end{remark}













\section{Higher-order traveling waves}\label{s:higher}




In this section we show that parametric regions in which neither moving nor pinned traveling waves exist lead to higher-order traveling waves. These solutions travel but periodically change their profiles.
\begin{definition}
    A profile $w$ forms a  higher-order $(c,m)$-traveling wave for some $c \in \Z, m \in \N$  if
    \[
    (\dots, w_{-1-c}, w_{-c}, w_{1-c}, \dots) = F^m\left((\dots, w_{-1}, w_{0}, w_{1}, \dots)\right).
    \]
    
    \noindent The ratio     
    \begin{equation}\label{e:gamma}
        \gamma=\frac{c}{m}.    
    \end{equation}    
    is called the speed of $(c,m)$-traveling wave. 
\end{definition}

In other words, a  higher-order $(c,m)$-traveling wave is a set of $m$ profiles which periodically repeat themselves and each periodic occurrence is coupled with a shift/speed $c$. Note that we do not call them periodic waves because these are commonly used to periodic profiles which travel \cite{Sherratt1996}. 

To illustrate, let us go back to Example~\ref{x:nonexistence} and show the existence of a higher-order $(1,2)$-traveling wave.

\begin{example}\label{x:nonexistence:1-2}
Let us consider again a bistable nonlinearity~\eqref{e:bistable} with $a=3$, $K=7$ given by~\eqref{e:bistable:x:nonexistence}, see Fig.~\ref{fig:ex_nonuniqueness_nonexistence_reaction}, 
and diffusion  $\delta=3$. We have shown in Ex.~\ref{x:nonexistence} that in this case there are neither moving nor pinned traveling waves of the form~\eqref{e:wave:ansatz}.

The following computation shows that in this parametric setting the profile
\[
w=(\ldots, 0,0,3,4,7,7,\ldots)
\]
forms higher-order $(1,2)$-traveling wave. Indeed, we have
\begin{align*}
w &=(\ldots, 0,0,3,4,7,7,\ldots) \\
 \overline{w}=F(w) &= (\ldots, 0, 1, 4, 5, 7, 7, \ldots),\\    
 F^2(w)=F(\overline{w}) &= (\ldots, 0,3,4,7,7,7, \ldots), \\
\end{align*}
see Fig.~\ref{fig:example_higher_order_waves} for illustration. Note that profiles $w$ with a core $w^\mathcal{C}=(3,4)$ and $\overline{w}$ with a core $\overline{w}^\mathcal{C}=(1,4,5)$ interchange periodically every $m=2$ periods while being shifted by $c=-1$ to the left. They both form a higher-order $(-1,2)$-traveling wave with speed $\gamma=-1/2$.

\begin{figure}
    \centering
    \includegraphics[width=.32\textwidth]{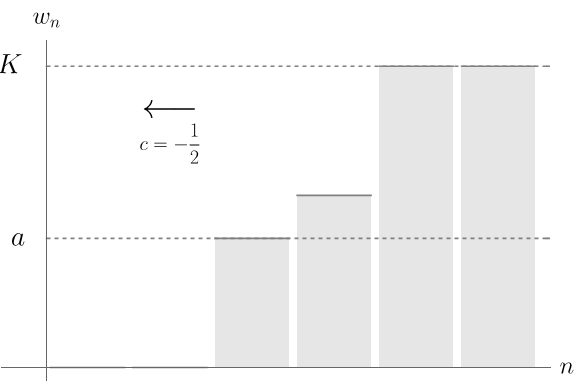}
    \includegraphics[width=.32\textwidth]{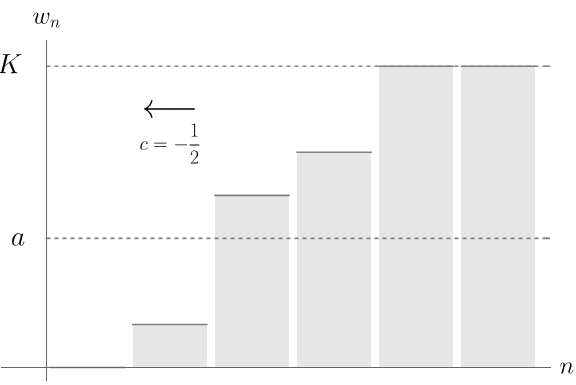}
    \includegraphics[width=.32\textwidth]{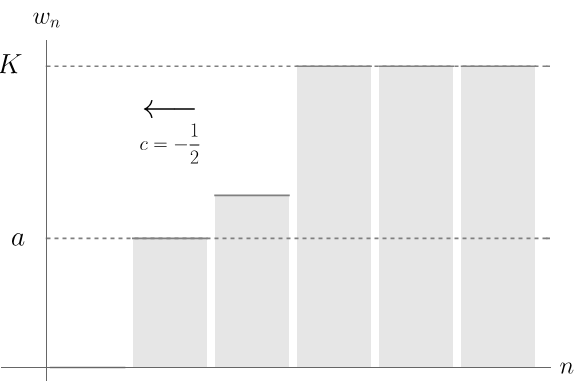}
    \caption{The higher-order $(1,2)$-traveling wave from Ex.~\ref{x:nonexistence} with $K=7$, $a=3$. The profiles with cores $w^\mathcal{C}=(3,4)$ (the first and third panel) and $\overline{w}^\mathcal{C}=(1,4,5)$ (the central panel) interchange periodically every $m=2$ periods while being shifted by $c=1$ to the left.}
    \label{fig:example_higher_order_waves}
\end{figure}

Note that $w$ is a higher-order $(-1,2)$-traveling wave for any $\delta\geq a=3$.
\end{example}

This example can be generalized so that we can get behavior of waves for any diffusion parameter and the extreme case of reaction function like 
\eqref{e:bistable:x:nonexistence}, which we call the `maximal' bistable reaction and can be seen as an equivalent of bistable caricatures~\cite{McKean1970} 
\begin{equation}\label{e:maximal:reaction}
f^{a,K}_{\mathrm{max}} (u) :=\begin{cases}
    0, & u <a, \\
    a, & u=a, \\
    K, & u>a.    
\end{cases}    
\end{equation}

In this case, we can mimic the construction from Ex.~\ref{x:nonexistence:1-2} to get the existence of higher-order $(\pm 1,2)$-traveling waves in general.

\begin{lemma}[Sufficient condition for existence $(\pm 1,2)$-traveling waves]\label{l:suf:higher:order}
Let $1 < a < \frac{K}{2}$ and $\delta$ be such that 
$h_\delta(K,0)=a$. Then $w=(\ldots, 0,0,a,K-a,K,K,\ldots) $ is a higher-order $(-1,2)$-traveling wave of \eqref{e:RDCA:DDS} with $f^{a,K}_{\mathrm{max}}$.

Let $\frac{K}{2}<a<K-1$ and $\delta$ be such that 
$h_\delta(K,0)=a$. Then $w=(\ldots, 0,0,K-a,a,K,K,\ldots) $ is a higher-order $(1,2)$-traveling wave of \eqref{e:RDCA:DDS} with $f^{a,K}_{\mathrm{max}}$.
\end{lemma}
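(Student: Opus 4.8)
The plan is to prove both statements by brute force: apply $F$ to $w$ once to obtain the intermediate profile $\overline w:=F(w)$, apply $F$ once more, and check that $F^{2}(w)=F(\overline w)$ is exactly $w$ shifted by the claimed $c$. The only structural inputs needed are that $h_\delta$ is antisymmetric with $h_\delta(n,n)=0$, the elementary bound $h_\delta(m,0)\le m/2$ (already used in Lemma~\ref{l:necessary:w1}), the standing hypothesis $h_\delta(K,0)=a$, and the fact that for $f^{a,K}_{\mathrm{max}}$ the image of a value $v$ depends only on the trichotomy $v<a$ / $v=a$ / $v>a$.

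For the first statement I would index $w_0=0$, $w_1=a$, $w_2=K-a$, $w_3=K$, with flat tails $w_n=0$ for $n\le 0$ and $w_n=K$ for $n\ge 3$. Since $a<K/2$ forces $K-a>a$, applying $f^{a,K}_{\mathrm{max}}$ pointwise gives $f(w)=(\dots,0,0,a,K,K,\dots)$ with the entry $a$ at index $1$; then \eqref{e:recurrence:general} with $c=0$ yields $\overline w_n=0$ for $n\le -1$, $\overline w_0=h_\delta(a,0)$, $\overline w_1=a-h_\delta(a,0)+h_\delta(K,a)$, $\overline w_2=K-h_\delta(K,a)$, and $\overline w_n=K$ for $n\ge 3$. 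The next step is to check, using $1<a<K/2$ and $h_\delta(m,0)\le m/2$, that $0<\overline w_0<a$, that $\overline w_1>a$, and that $a<\overline w_2\le K$; consequently $f^{a,K}_{\mathrm{max}}$ collapses $\overline w$ to $f(\overline w)=(\dots,0,0,0,K,K,\dots)$ with the first $K$ at index $1$. A final application of \eqref{e:recurrence:general} with $c=0$, now invoking $h_\delta(K,0)=a$ (and $h_\delta(0,K)=-a$), produces $F^{2}(w)_n=0$ for $n\le -1$, $F^{2}(w)_0=a$, $F^{2}(w)_1=K-a$, $F^{2}(w)_2=K$ — that is, $F^{2}(w)_n=w_{n+1}$ for all $n$, which is precisely the statement that $w$ is a higher-order $(-1,2)$-traveling wave.

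For the second statement I would appeal to the left--right reflection already used in Lemma~\ref{l:right:traveling}: composing the spatial reversal with $v\mapsto K-v$ conjugates the RDCA driven by $f^{a,K}_{\mathrm{max}}$ to the one driven by $\widehat{f^{a,K}_{\mathrm{max}}}=f^{K-a,K}_{\mathrm{max}}$, interchanges $a$ and $K-a$, carries $(\dots,0,0,K-a,a,K,K,\dots)$ to $(\dots,0,0,a,K-a,K,K,\dots)$, and reverses the sign of the shift. Because this conjugation intertwines every power $F^{m}$, the $(-1,2)$-wave furnished by the first statement for the parameter $K-a$ transports to a $(1,2)$-wave for $a$. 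Equivalently, one may simply rerun the two-step computation of the previous paragraph verbatim for the profile $(\dots,0,0,K-a,a,K,K,\dots)$.

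I expect the real work to be concentrated in the middle step of the first statement: verifying that the intermediate profile $\overline w=F(w)$ — which need not be monotone — still has every coordinate in the correct $f^{a,K}_{\mathrm{max}}$-regime, and in particular the strict inequality $\overline w_1>a$, i.e. $h_\delta(K,a)>h_\delta(a,0)$. Everything surrounding it is routine flat-tail bookkeeping: on the semi-infinite stretches where $f(w)$ (respectively $f(\overline w)$) is constant one has $h_\delta(n,n)=0$, so $F$ acts as the identity and only the three or four transitional coordinates need to be tracked. Once those intermediate regimes are fixed, the reappearance of $w$ (shifted left by one) after the second step is forced by the single relation $h_\delta(K,0)=a$.
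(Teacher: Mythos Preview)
Your plan is exactly the paper's: compute $\overline w=F(w)$ coordinate by coordinate, pin down the $f^{a,K}_{\mathrm{max}}$-regime of each $\overline w_n$, and then apply $F$ once more using only $h_\delta(K,0)=a$. The paper likewise reduces everything to the single strict inequality $h_\delta(K,a)>\lfloor a/2\rfloor=h_\delta(a,0)$, which it asserts (without argument) as a consequence of $h_\delta(K,0)=a$; your handling of the second statement via the $v\mapsto K-v$ symmetry is a clean alternative to the paper's ``analogously''.

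The gap is precisely where you locate it. The ingredients you name for the middle step --- $1<a<K/2$ together with the bound $h_\delta(m,0)\le m/2$ --- do not yield $\overline w_1>a$: that bound is an \emph{upper} estimate on both $h_\delta(a,0)$ and $h_\delta(K,a)$ and gives no comparison between them. What is actually needed is the exact value $h_\delta(K,a)=\min\bigl(\delta,\lfloor(K-a)/2\rfloor\bigr)$ together with $\delta\ge a$ (forced by $h_\delta(K,0)=a$), and then the strict inequality $\lfloor(K-a)/2\rfloor>\lfloor a/2\rfloor$. The latter fails exactly when $K=2a+1$ with $a$ even: for $K=9$, $a=4$, $\delta=4$ the hypotheses of the lemma hold, yet $\overline w_1=4=a$, so $f(\overline w)=f(w)$ and $F^{2}(w)=\overline w=(\ldots,0,2,4,7,9,\ldots)$ is the pinned profile $w_2^{*}$ of Corollary~\ref{c:char:maximal}, case 2(b), not a left shift of $w$. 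Thus the inequality you flag as ``the real work'' --- and with it the lemma as literally stated --- breaks in this boundary case; away from $K=2a+1$ with $a$ even, your argument (and the paper's) goes through verbatim.
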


\begin{proof}
    Let us prove the former statement, the latter can be proved analogously. We first note that $h_\delta(K,0)=a$ implies $h_\delta(K,a)>\lfloor \frac{a}{2} \rfloor$, see~\eqref{e:h_delta}.  Let $1 < a < \frac{K}{2}$ and $\delta$ be such that $h_\delta(K,a)>\lfloor \frac{a}{2} \rfloor$ and $h_\delta(K,0)=a$. Note that $a<\frac{K}{2}$ implies $h_\delta(a,0)=\lfloor \frac{a}{2} \rfloor$. Applying $F$ from \eqref{e:RDCA:F} to $w=(\ldots, 0,0,a,K-a,K,K,\ldots) $, we get
    \[
    \overline{w}=F(w) = \left(\ldots, 0, \left\lfloor \frac{a}{2} \right\rfloor, a-\left\lfloor \frac{a}{2} \right\rfloor+h_\delta(K,a), K-h_\delta(K,a), K, K, \ldots\right)
    \]
    Consequently,
    \[
    F^2(w)=F(\overline{w}) = (\ldots, 0,a,K-a,K,K,K\ldots).\qedhere
    \]
\end{proof}

We can now fully characterize waves for automata \eqref{e:RDCA:DDS} with $f^{a,K}_{\mathrm{max}}$ given by \eqref{e:maximal:reaction} and reveal that there are three qualitatively different types of wave behavior for capacities $K$ being either even, or odd of the type $K=4m-1$ or $K=4m+1$, $m\in \N$.

\begin{corollary}[Characterization of bistable RDCAs with `maximal' reaction]\label{c:char:maximal}
    Let us consider the cellular automaton \eqref{e:RDCA:DDS} with $f^{a,K}_{\mathrm{max}}$ given by \eqref{e:maximal:reaction} and $2\leq a\leq K-2$.
     \begin{enumerate}
        \item If $\delta \leq \min \lbrace a-1, K-a-1 \rbrace$, there exists a pinned wave $w^*$.

        \item If $a \in \left\{\lfloor \frac{K}{2}\rfloor,\lceil  \frac{K}{2}\rceil\right\}$ and
        \begin{enumerate}
            \item $K = 4m-1$, $m \in \N$, there exists a higher order $(\pm 1, 2)$-traveling wave for all $\delta \geq \min\lbrace a, K-a\rbrace$.
            \item $K \neq 4m-1$, $m \in \N$, there exists a pinned wave $w^*$ for all $\delta \geq \min\lbrace a, K-a\rbrace$.
        \end{enumerate}
                
        \item If $a \not\in \left\{\lfloor \frac{K}{2}\rfloor,\lceil  \frac{K}{2}\rceil\right\}$ and
        \begin{enumerate}
            \item $\delta = a$, then there exists a higher-order $(\pm 1, 2)$-traveling wave.
            \item $\delta\geq \min\lbrace a+1, K-a+1\rbrace$, then there exists a traveling wave with $c=\pm 1$.
        \end{enumerate}        
    \end{enumerate}
\end{corollary}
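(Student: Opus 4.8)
\emph{Proof strategy.} The plan is to reduce to $a\le K/2$ by reflection, note that for each admissible $a$ the three hypotheses then exhaust all $\delta\in\N$, and treat each window either by an explicit construction (the pinned waves of parts~(1) and~(2b)) or by quoting an earlier result (Lemma~\ref{l:suf:higher:order} for the higher-order waves of~(2a),~(3a); Theorem~\ref{t:ns:left} for the moving wave of~(3b)). The reversal $\widehat u_n=K-u_{-n}$ conjugates the dynamics of $f^{a,K}_{\mathrm{max}}$ with those of $f^{K-a,K}_{\mathrm{max}}$ — using $h_\delta(K-x,K-y)=-h_\delta(x,y)$ — so it preserves pinned waves, interchanges left and right moving waves as in Lemmas~\ref{l:right:traveling}--\ref{l:right:traveling:dec}, and, by the same computation, turns a higher-order $(c,m)$-wave into a higher-order $(-c,m)$-wave. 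Hence I may assume $a\le K/2$, in which orientation part~(1) reads $\delta\le a-1$, part~(2) reads $a=\lfloor K/2\rfloor$, $\delta\ge a$, and part~(3) reads $a\le K/2-1$ with $\delta=a$ in~(3a) and $\delta\ge a+1$ in~(3b); together these cover all $\delta\in\N$. Throughout I use the elementary evaluations $h_\delta(K,0)=\min\{\lfloor K/2\rfloor,\delta\}$, $h_\delta(a,0)=\min\{\lfloor a/2\rfloor,\delta\}$, $h_\delta(K,a)=\min\{\lfloor(K-a)/2\rfloor,\delta\}$ and $h_\delta(m,m)=0$, all read off from~\eqref{e:h_delta}.

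For the pinned waves I would write down the core and verify~\eqref{e:pinned:sum} of Lemma~\ref{l:pinned:ns}. In part~(1), $2\delta\le(a-1)+(K-a-1)=K-2$ gives $\delta<a<K-\delta$ and $h_\delta(K,0)=\delta$, and the core $\wc=(\delta,K-\delta)$ is pinned: with $f^{a,K}_{\mathrm{max}}(\delta)=0$ and $f^{a,K}_{\mathrm{max}}(K-\delta)=K$, the only nontrivial instances of~\eqref{e:pinned:sum} are $n=2$ (giving $h_\delta(K,0)=\delta=w_1-f(w_1)$) and $n=3$ (giving $h_\delta(K,K)=0=(w_1-f(w_1))+(w_2-f(w_2))$, since $w_2-f(w_2)=-\delta$), and all later partial sums are $0$. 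In part~(2b) the arithmetic input is: for $a\in\{\lfloor K/2\rfloor,\lceil K/2\rceil\}$ one has $\lfloor a/2\rfloor=\lfloor(K-a)/2\rfloor$ precisely when $K\not\equiv3\pmod4$ (checked over the residues of $K$ modulo $4$); calling this common value $r=\lfloor K/4\rfloor$, so that $1\le r<a<K-r$, the core $\wc=(r,a,K-r)$ is pinned, because $\delta\ge a>r=\lfloor a/2\rfloor=\lfloor(K-a)/2\rfloor$ forces $h_\delta(a,0)=r=h_\delta(K,a)$ and~\eqref{e:pinned:sum} then telescopes with $w_1-f(w_1)=r$, $w_2-f(w_2)=a-a=0$, $w_3-f(w_3)=-r$.

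For the higher-order waves I would check the hypotheses of the first statement of Lemma~\ref{l:suf:higher:order}. When $K=4m-1$, $a=2m-1$, $\delta\ge a$ (part~(2a); $a\ge2$ forces $m\ge2$, so $1<a<K/2$), we have $\lfloor K/2\rfloor=a$, hence $h_\delta(K,0)=\min\{a,\delta\}=a$; when $a\le K/2-1$, $\delta=a$ (part~(3a)), we have $a<\lfloor K/2\rfloor$, hence $h_\delta(K,0)=\min\{\lfloor K/2\rfloor,a\}=a$. Either way Lemma~\ref{l:suf:higher:order} yields the higher-order $(-1,2)$-traveling wave $(\dots,0,0,a,K-a,K,K,\dots)$, and its reversal handles the mirror orientation $a>K/2$. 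For~(3b), the value $p=a+1$ lies in $(a,K/2]_\Z$, satisfies $\delta\ge p$, and $f^{a,K}_{\mathrm{max}}(p)=K\ge 2(a+1)=2p$, so Theorem~\ref{t:ns:left} provides a left traveling wave with $c=-1$, and Theorem~\ref{t:ns:right} the right one in the mirror.

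The only real work is the pinned-wave verification in~(1) and~(2b): the core values must be chosen so that $h_\delta$ collapses to $\delta$ (resp.\ $r$) on exactly the arguments that occur and the sums $\sum_{i=1}^{n-1}(w_i-f(w_i))$ return to $0$ just as the profile reaches $K$, with nothing violated for larger $n$. Closely tied to this is the residue dichotomy $\lfloor a/2\rfloor=\lfloor(K-a)/2\rfloor\iff K\not\equiv3\pmod4$, which is precisely the heart of the split between~(2a) and~(2b): when $K=4m-1$ no integer can serve as $w_1$ in a symmetric pinned core $(w_1,a,K-w_1)$ (and a short separate check rules out a two-element core $(w_1,K-w_1)$ as well), which is exactly why a higher-order wave must step in. Everything else is a short computation together with a direct appeal to Lemmas~\ref{l:pinned:ns} and~\ref{l:suf:higher:order} and Theorems~\ref{t:ns:left}--\ref{t:ns:right}.
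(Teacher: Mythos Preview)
Your proposal is correct and follows essentially the same route as the paper: the paper also derives 2(a), 3(a) from Lemma~\ref{l:suf:higher:order}, derives 3(b) from Theorem~\ref{t:ns:left}, and proves (1) and (2b) by writing down the very same pinned profiles $(\dots,0,\delta,K-\delta,K,\dots)$ and $(\dots,0,\lfloor K/4\rfloor,a,K-\lfloor K/4\rfloor,K,\dots)$ (splitting 2(b) into $K=4m+1$ with $m=\lfloor K/4\rfloor$ and $K$ even). Your write-up adds the reflection reduction to $a\le K/2$, the verification via Lemma~\ref{l:pinned:ns}, and the residue-modulo-4 explanation for the 2(a)/2(b) split, all of which the paper leaves implicit.
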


\begin{proof}
Cases 2(a) and 3(a) follow from Lem.~\ref{l:suf:higher:order}. The case 3(b) is a consequence of Thm.~\ref{t:ns:left}. We prove the rest constructively. To prove case 1, observe that the profile
\[
w_1^*=(\ldots,0,0,\delta,K-\delta,K,K,\ldots)
\]
is a pinned wave as $F(w_1^*)=w_1^*$. Similarly, to prove case 2(b) observe that the profile
\[
w_2^*= (\ldots,0,0,m,a,K-m,K,K,\ldots)
\]
is pinned if $a \in \left\{\lfloor \frac{K}{2}\rfloor,\lceil  \frac{K}{2}\rceil\right\}$ and $K=4m+1$, $m \in \N$. Finally, the profile
\[
w_3^*= (\ldots,0,0,\left\lfloor\frac{K}{4}\right\rfloor,a,K-\left\lfloor\frac{K}{4}\right\rfloor,K,K,\ldots)
\]
is pinned if  $a=\frac{K}{2}$ and $K=2m$, $m \in \N$.\qedhere



\end{proof}

\begin{figure}
    \centering
    \includegraphics[width=0.32\textwidth]{pic_CAwaves_ctverecky_n7_delta.pdf}
    \includegraphics[width=0.32\textwidth]{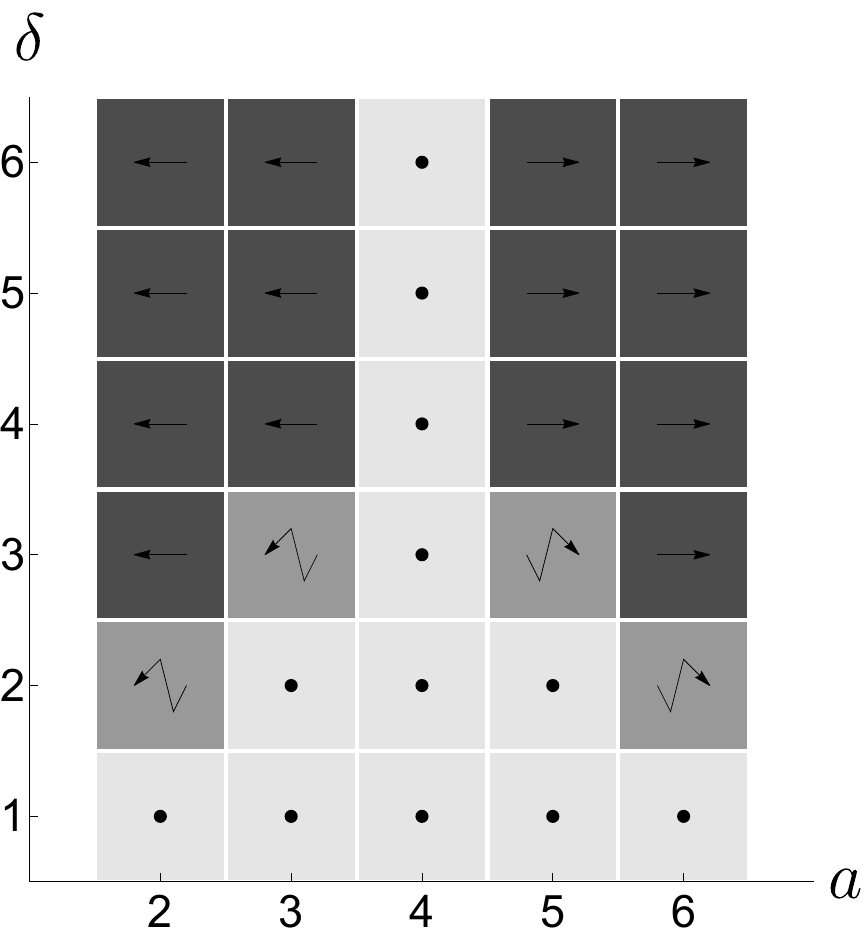}
    \includegraphics[width=0.32\textwidth]{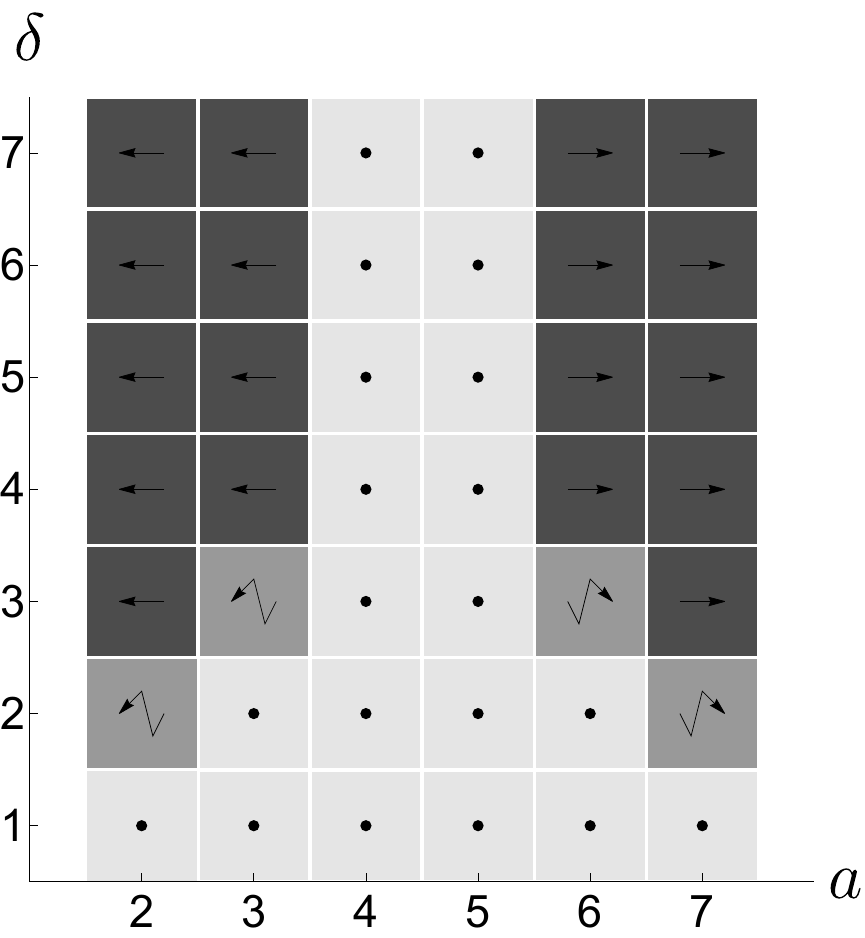}
    \caption{Existence of higher-order traveling waves in bistable automata with the bistable reaction function~\eqref{e:maximal:reaction} for $K=7$ (left panel), $K=8$ (central panel), and $K=9$ (right panel). Combinations of viability $a$ and diffusion parameter $\delta$ with light gray cells and a dot correspond to pinned waves, dark gray cells with straight arrows to monotone moving traveling waves~\ref{e:wave:ansatz}, and medium gray with  zigzag arrows to higher-order $(\pm 1,2)$-waves.}
    \label{fig:3scen}
\end{figure}

If we consider the maximal reaction function $f^{a,K}_{\mathrm{max}}$ from \eqref{e:maximal:reaction}, Cor.~\ref{c:char:maximal} yields three qualitatively different cases.
\begin{example}\label{x:3cases}
    We focus on the most important differences which correspond to the behavior for balanced or almost balanced reaction with $a \in \left\{\lfloor \frac{K}{2}\rfloor,\lceil  \frac{K}{2}\rceil\right\}$.

    If $K = 4m-1$, $m \in \N$, there exist infinitely many pairs $(a,\delta)$ leading to higher-order waves for all values with $a \in \left\{\lfloor \frac{K}{2}\rfloor,\lceil  \frac{K}{2}\rceil\right\}$ and $\delta \geq \left\{a, K-a\right\}$, see the left panel of Fig.~\ref{fig:3scen} with $K=7$.

    If $K = 2m$, $m \in \N$, there exist finitely many pairs $(a,\delta)$ leading to higher-order waves since the pairs with $a=K/2$ lead to pinned waves for any $\delta\in\mathbb{N}$, see the central panel of Fig.~\ref{fig:3scen} with $K=8$.

    Similarly if $K = 4m+1$, $m \in \N$, there exist finitely many pairs $(a,\delta)$ leading to higher-order waves since the pairs with $a \in \left\{\lfloor \frac{K}{2}\rfloor,\lceil  \frac{K}{2}\rceil\right\}$ lead to pinned waves for any $\delta\in\mathbb{N}$, see the right panel of Fig.~\ref{fig:3scen} with $K=9$.        
\end{example}




\section{Examples of traveling waves}\label{s:examples:TW}
In this final section we apply our results and discuss the impact of the reaction function's steepness on the existence of moving, pinned, and higher-order waves. For this purpose, we consider a class of bistable reaction functions
\begin{equation}\label{e:truncated:f}
    f(p) = \begin{cases}
    \min \{K, \lceil {p+\lambda p(K-p)(p-a)} \rceil \},\quad \text{if } p>a,\\
    \max\{0,\lfloor {p+\lambda p(K-p)(p-a)} \rfloor \},\quad \text{if } p\leq a.
    \end{cases}
\end{equation}
with a parameter $\lambda\in\R$. This form is chosen to satisfy \eqref{e:bistable}. Moreover, for small values of $\lambda$ we get the `weakest' bistability of the shape as in the left panel in Fig.~\ref{fig:reaction_waveprofile_comparison}. On the contrary, if $\lambda$ is large enough, we get a Heaviside-like `maximal' bistability as in Fig.~\ref{fig:ex_nonuniqueness_nonexistence_reaction}. We call $f$ in \eqref{e:truncated:f} the truncated polynomial bistable reaction function and illustrate its shape on six examples in Fig.~\ref{fig:x:polynomial:rf}.

\begin{figure}
    \centering
    \subcaptionbox{$\lambda=0.1$}{\includegraphics[width=0.32\textwidth]{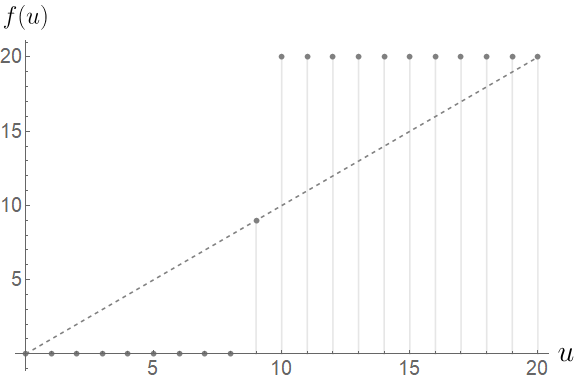}}    
    \subcaptionbox{$\lambda=0.05$}{\includegraphics[width=0.32\textwidth]{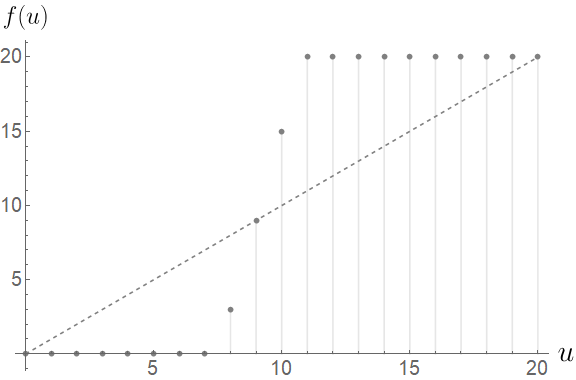}}
    \subcaptionbox{$\lambda=0.02$}{\includegraphics[width=0.32\textwidth]{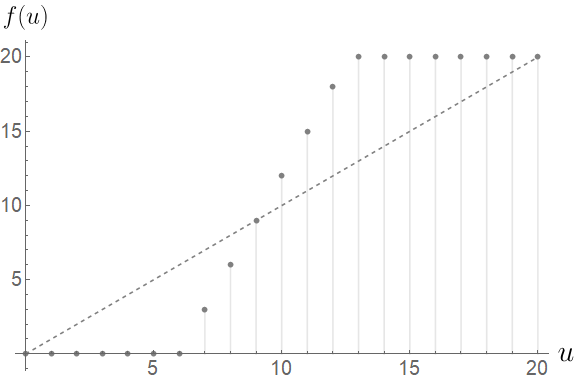}}
    \subcaptionbox{$\lambda=0.01$}{\includegraphics[width=0.32\textwidth]{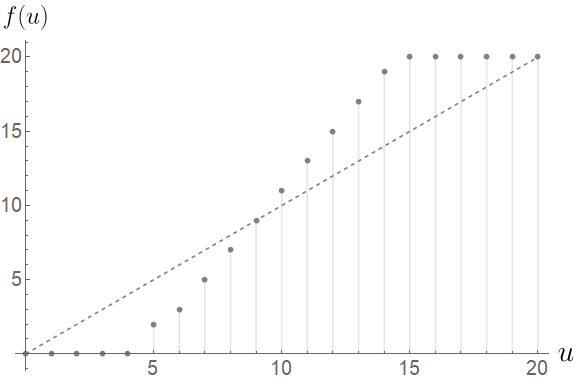}}
    \subcaptionbox{$\lambda=0.005$}{\includegraphics[width=0.32\textwidth]{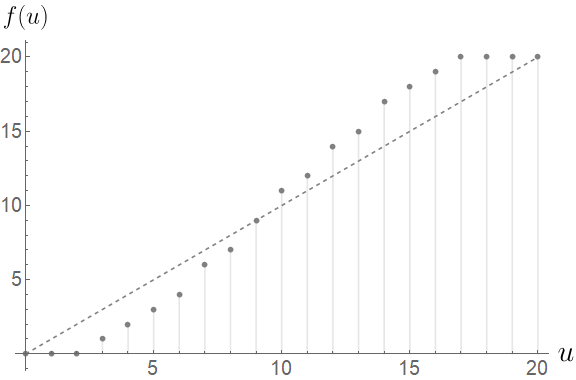}}
    \subcaptionbox{$\lambda=0.001$}{\includegraphics[width=0.32\textwidth]{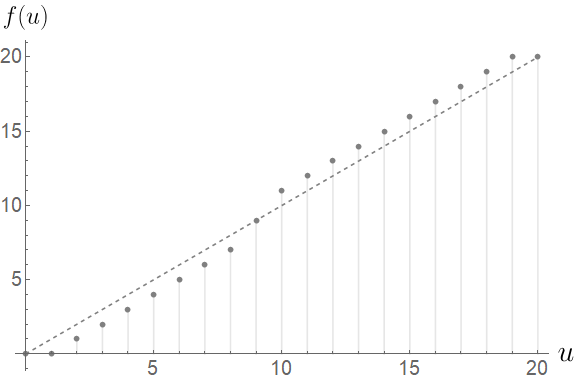}}
    \caption{Truncated polynomial bistable reaction function~\eqref{e:truncated:f} for six various values of $\lambda$ and  $K=20$.}
    \label{fig:x:polynomial:rf}
\end{figure}

We can apply the results from previous sections and derive the existence of thresholds $ \overline{\lambda}$, which yield the existence of moving traveling waves for sufficiently strong diffusion $\delta>0$, and $\underline{\lambda}$, for which there are no moving traveling waves for any diffusion $\delta>0$.

\begin{theorem}[Existence of moving traveling waves for the truncated polynomial bistable reaction function]\label{t:truncated:f}
Let $\delta \in \N$, $a,K \in \N$ be such that $2\leq a \leq K-2$ and the truncated polynomial bistable reaction function $f$ be defined by \eqref{e:truncated:f}. Then:
\begin{enumerate}
    \item for each $a,K$ there exists $\underline{\lambda}>0$ such that there is no moving traveling wave of the form \eqref{e:wave}--\eqref{e:wave:core} for all $\lambda<\underline{\lambda}$ and all $\delta>0$,
    \item if $a \neq \frac{K}{2}$ then  there exists $\overline{\lambda}>0$ and $\overline{\delta}>0$ such that a moving traveling wave of the form \eqref{e:wave}--\eqref{e:wave:core} for all $\lambda>\overline{\lambda}$ and all $\delta>\overline{\delta}>0$.
\end{enumerate}
\end{theorem}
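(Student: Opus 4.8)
The plan is to derive both parts from the necessary and sufficient conditions of Thm.~\ref{t:ns:left}--\ref{t:ns:right}, by controlling how the value $f(p)$ compares to $2p$ (resp. $2p-K$) as $\lambda$ varies. Recall that a left traveling wave exists iff $a<K/2$ and there is $p\in(a,K/2]_\Z$ with $\delta\geq p$ and $f(p)\geq 2p$; symmetrically for right traveling waves. So the whole question reduces to: for which $\lambda$ does there exist an integer $p$ in the relevant window for which the reaction value is large enough, and is the required $\delta$ finite?

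\textbf{Part (1).} Here I want to show the conditions fail for \emph{every} $\delta$, hence it suffices to show $f(p)<2p$ for all $p\in(a,K/2]_\Z$ and, symmetrically, $f(p)>2p-K$ for all $p\in[K/2,a)_\Z$, once $\lambda$ is small. The point is that $p\in(a,K/2]_\Z$ is a \emph{finite} set of integers, each satisfying $p>a$, so $f(p)=\min\{K,\lceil p+\lambda p(K-p)(p-a)\rceil\}$. As $\lambda\to 0^+$, the quantity $\lambda p(K-p)(p-a)\to 0$, and since $p-a\geq 1$ and $p(K-p)>0$ this correction is a fixed positive multiple of $\lambda$; for $\lambda$ below some $\underline\lambda$ (depending only on $a,K$, via the finitely many values of $p$) we get $p+\lambda p(K-p)(p-a)<p+1$, hence $\lceil\cdot\rceil=p+1\leq 2p$ because $p>a\geq 2$ gives $p\geq 3>1$, wait — we need $p+1\leq 2p$, i.e. $p\geq 1$, which holds. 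Actually we need the \emph{strict} failure $f(p)<2p$, i.e. $f(p)\leq 2p-1$; since $f(p)=p+1$ and $p\geq a+1\geq 3$, indeed $p+1\leq 2p-1$. The same estimate, applied to the right-traveling window (using the $p\leq a$ branch with floor, or noting $p<a$ forces small correction and $f(p)\geq p$ while $2p-K<p$ for $p<K$), shows $f(p)>2p-K$ there. Taking $\underline\lambda$ to be the minimum of the finitely many thresholds gives part (1).

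\textbf{Part (2).} Now assume $a\neq K/2$; by the symmetry of Thm.~\ref{t:ns:right} we may assume $a<K/2$ (the case $a>K/2$ is handled identically via right traveling waves). We must produce a single $p\in(a,K/2]_\Z$ — note this interval is nonempty precisely because $a<K/2$ — for which $f(p)\geq 2p$ once $\lambda$ is large, and then choose $\overline\delta$ to be that $p$ (which is finite). A natural choice is $p=\lfloor K/2\rfloor$ (or $p=a+1$ if that is closer to maximizing the polynomial). For $\lambda$ large enough, $p+\lambda p(K-p)(p-a)\geq K$, so $f(p)=\min\{K,\cdot\}=K\geq 2p$ since $p\leq K/2$. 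Thus set $\overline\lambda$ to be a value making this hold for $p=\lfloor K/2\rfloor$, and $\overline\delta=\lfloor K/2\rfloor$; then for every $\lambda>\overline\lambda$ and every $\delta>\overline\delta$ the hypotheses of Thm.~\ref{t:ns:left} are met, producing a moving traveling wave.

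\textbf{Main obstacle.} The routine analytic content is trivial; the only genuine care needed is (a) handling the boundary/ceiling-floor arithmetic cleanly so that the strict inequality $f(p)<2p$ (not merely $\leq$) is obtained in part (1) for the extreme value $p$ close to $a$, where $2p$ and $f(p)$ are closest, and (b) making sure in part (2) that the chosen $p$ genuinely lies in $(a,K/2]_\Z$ and that $2p\leq K$, which is automatic for $p\leq K/2$ but must be stated. I expect the write-up to consist mostly of bookkeeping: in (1) bound $\lambda_0<\dfrac{p-1}{p(K-p)(p-a)}$ — wait, we need $\lambda p(K-p)(p-a)\leq p-1$, hmm that would give $f(p)\le 2p-1$; safer to just require $\lambda p(K-p)(p-a)<1$ so the ceiling is $p+1$; both work — uniformly over the finitely many $p$, and in (2) take $\lambda_0>\dfrac{K-p}{p(K-p)(p-a)}=\dfrac{1}{p(p-a)}$ for $p=\lfloor K/2\rfloor$.
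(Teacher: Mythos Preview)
Your proposal is correct and follows essentially the same route as the paper: both parts reduce directly to Thms.~\ref{t:ns:left}--\ref{t:ns:right} and amount to checking whether $f(p)\gtrless 2p$ as $\lambda$ varies over the finitely many relevant integers $p$. The only cosmetic difference is that in part~(2) the paper fixes $p=a+1$ (giving the explicit constants $\overline\lambda=\tfrac{1}{K-a-1}$ and $\overline\delta=a+1$) rather than your primary choice $p=\lfloor K/2\rfloor$, which you also list as an alternative.
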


\begin{proof}
We only consider left traveling waves, the proof for right traveling waves follows analogously. To prove the first statement, we take $\lambda$ in $f$ \eqref{e:truncated:f} sufficiently small such that $\lambda<\overline{\lambda}$
\[
\overline{\lambda} p (K-p)(p-a)<1
\]  
for all $p\in(a,K)_\mathbb{Z}$. Consequently, \[f(p)\leq p+1<2p\] 
for all $p\in(a,K)_\mathbb{Z}$ and the condition \eqref{e:p:function} is violated. Thus, there is no left traveling wave.
    
    
To prove the second statement, consider $p=a+1$. Then we can determine a large enough $\overline{\lambda}$ such that the inequality \eqref{e:p:function} is satisfied for all $\lambda>\overline{\lambda}$, i.e.,
    \[
    \overline{\lambda}=\frac{1}{(K-p)(p-a)}=\frac{1}{K-a-1}.
    \]
Using $\overline{\lambda}$ in \eqref{e:truncated:f}, we get  \[f(a+1)=a+1+\frac{1}{K-a-1}(a+1)(K-a-1)=2a+2,\] i.e., $f(p)\geq 2p$ holds and the inequality \eqref{e:p:function} is satisfied for all $\lambda>\overline{\lambda}$. Consequently, for all $\delta\geq \overline{\delta}=a+1$, Thm.~\ref{t:ns:left} yields the existence of a left traveling wave.
\end{proof}

\begin{figure}
    \centering
    \subcaptionbox{$\lambda=0.1$}{\includegraphics[width=0.49\textwidth]{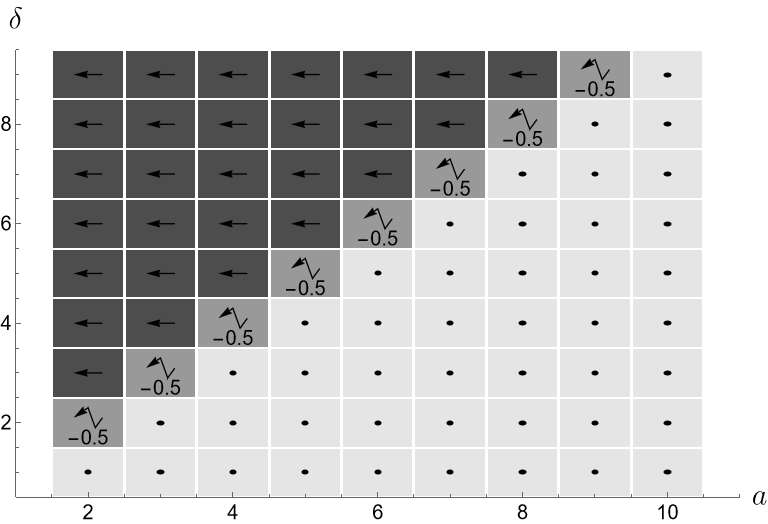}}    
    \subcaptionbox{$\lambda=0.05$}{\includegraphics[width=0.49\textwidth]{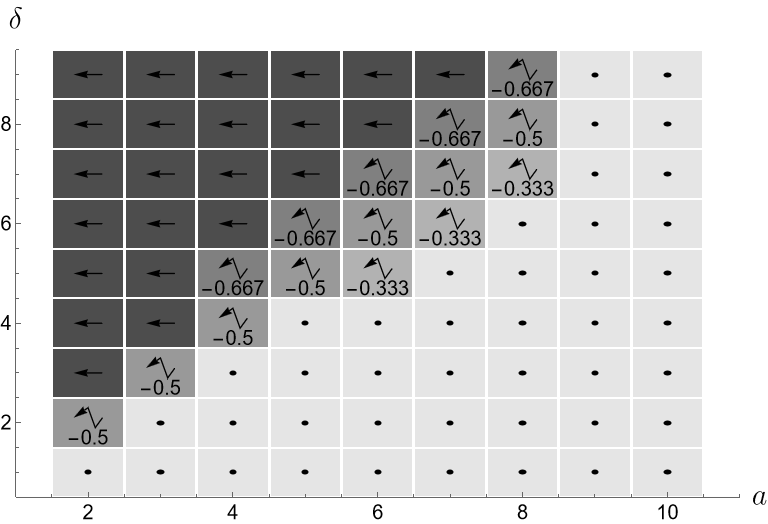}}
    \subcaptionbox{$\lambda=0.02$}{\includegraphics[width=0.49\textwidth]{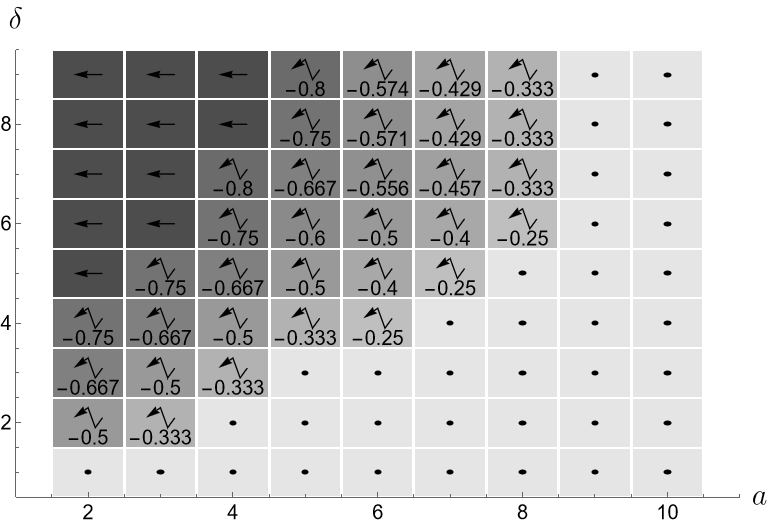}}
    \subcaptionbox{$\lambda=0.01$}{\includegraphics[width=0.49\textwidth]{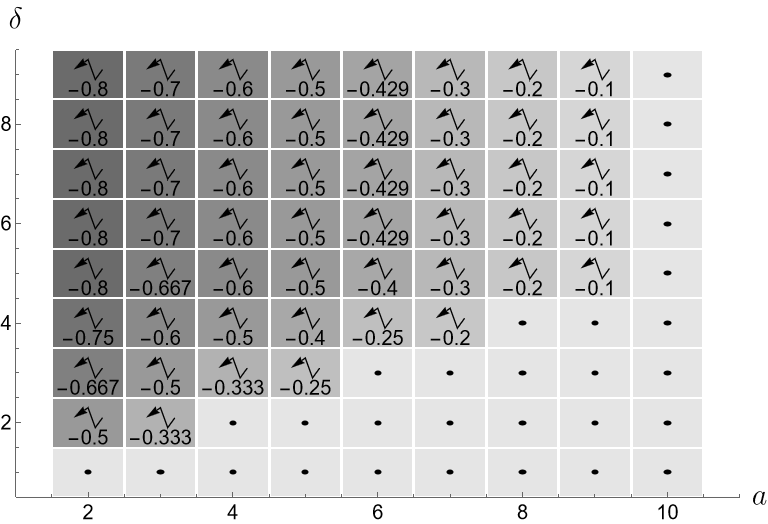}}
    \subcaptionbox{$\lambda=0.005$}{\includegraphics[width=0.49\textwidth]{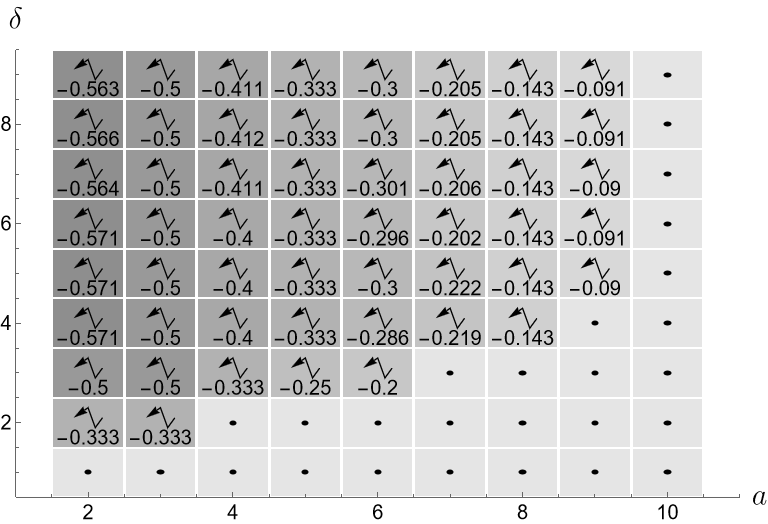}}
    \subcaptionbox{$\lambda=0.001$}{\includegraphics[width=0.49\textwidth]{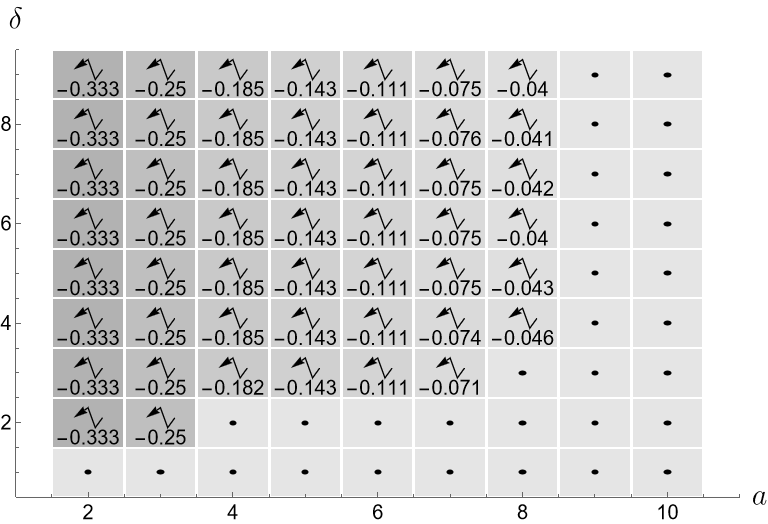}}
    \caption{Simulation results of existence of pinned (dot symbol), moving (straight arrow) and higher-order $(c,m)$-traveling waves (zigzag arrow) for $K=20$, various values of $a, \delta$, and the truncated polynomial bistable reaction function~\eqref{e:truncated:f} with 6 different values of $\lambda$ (each panel). 50 simulations have been performed for each of the combinations $a, \delta, \lambda$. The figure only depicts $a\leq K/2$, the values $a>K/2$ yields symmetric (right traveling) waves. The numerical values (and the gray shade) indicate the (average) speed $\gamma=c/m$ of higher-order $(c,m)$-waves~\eqref{e:gamma}.}
    \label{fig:x:polynomial}
\end{figure}

Let us highlight four key conclusions that follow from Thm.~\ref{t:truncated:f} and are illustrated in Fig.~\ref{fig:x:polynomial} and distinguish the behavior of the bistable RDCA~\eqref{e:RDCA:DDS} from the LDE~\eqref{e:NagumoLDE} (and  from the PDE~\eqref{e:NagumoPDE}).

Firstly, if the diffusion parameter $\delta$ is small, there are no moving traveling waves for any combination of parameters $a,K,\lambda$. For the LDE~\eqref{e:NagumoLDE} there always exist sufficiently small $a$ and $d$ for which there are moving traveling waves, see Fig.~\ref{fig:speed:comparison}.

Secondly, if $\lambda$ is large and $K$ is fixed, there exists a threshold value $\bar{\delta}$ such that there exists a moving traveling wave for all $\delta>\bar{\delta}$ and for all $a\neq \frac{K}{2}$. For the LDE~\eqref{e:NagumoLDE} there is no such finite value.

Thirdly, there exist (intermediate) values of $\lambda$ for which we cannot find any moving traveling waves for small values $\left\lvert a-\frac{K}{2}\right\rvert$ (for any $\delta$). For such $\lambda$, moving traveling waves exist only for sufficiently large $\left\lvert a-\frac{K}{2}\right\rvert$ and sufficiently large $\delta$.

Finally, if the reaction parameter $\lambda$ is small, there are no moving traveling waves for any combination of $a,K,\delta$. In this case, the pinned waves are accompanied by higher-order waves (cf. Fig.~\ref{fig:x:polynomial}(d)--(f)). The speed $\gamma$ of higher-order $(c,m)$-waves \eqref{e:gamma} decreases as $\lambda$ decreases.



Fig.~\ref{fig:x:polynomial} illustrates these differences and shows results of  simulations for various values of $a,\delta, \lambda$. Each panel corresponds to the reaction functions from Fig.~\ref{fig:x:polynomial:rf}. We performed 50 simulations for each combination of $a,\delta,\lambda$ with a random initial condition and analyzed the outcome after $100$  time iterations. The figure depicts pinned (dot symbol), moving (straight arrow) and higher-order $(c,m)$-traveling waves (zigzag arrow). In the latter case, the average speed $\gamma$ \eqref{e:gamma} is also depicted since the waves are not unique, as we demonstrated in previous sections. Note that the pinned waves symbol is also used for periodic pinned waves which occur rarely in certain parameter configurations as well.



\section*{Acknowledgements} Petr Stehl\'{\i}k gratefully acknowledges the support by the Czech Science Foundation grant no. GA22-18261S. The authors thank Vladim\'{\i}r \v{S}v\'{\i}gler and Jon\'{a}\v{s} Volek for helpful comments.

\bibliographystyle{abbrv}
\bibliography{ref}

\end{document}